\numberwithin{equation}{section}
\newtheorem{theorem}{Theorem}[section]
\newtheorem{proposition}[theorem]{Proosition}
\newtheorem{corollary}[theorem]{Corollary}
\newtheorem{lemma}[theorem]{Lemma}
\newtheorem{remark}[theorem]{Remark}
\newtheorem{example}[theorem]{Example}
\newtheorem{case}{Case}
\newtheorem{subcase}{Case}
\numberwithin{subcase}{case}
\newtheorem{subsubcase}{Case}
\numberwithin{subsubcase}{subcase}
\date{}
\title{An elementary proof of the theorem on the imaginary quadratic fields with class number 1}
\author{James E. Carter\thanks{carterj@cofc.edu}}
\affil{Dept. of Mathematics, College of Charleston, Charleston, SC 29401}
\begin{document}

\maketitle

\begin{abstract}
Let $D$ be a square-free integer other than 1. Let $K$ be the quadratic field ${\mathbb Q}(\sqrt D)$. Let $\delta \in \{1,2\}$ with $\delta=2$ if $D\equiv 1 \pmod 4$. To each prime ideal $\mathcal P$ in $K$ that splits in $K/\mathbb Q$ we associate a binary quadratic form $f_{\mathcal P}$ and show that when $K$ is imaginary then $\mathcal P$ is principal if and only if $f_{\mathcal P}$ represents $\delta^2$, and when $K$ is real then $\mathcal P$ is principal if and only if $f_{\mathcal P}$ represents $\pm \delta^2$. As an application of this result we obtain an elementary proof of the well-known theorem on the imaginary quadratic fields with class number 1. The proof reveals some new information regarding necessary conditions for an imaginary quadratic field to have class number 1 when $D\equiv 1 \pmod 4$.

\end{abstract}

\section{Introduction}

\noindent Suppose $K$ is a number field with ring of integers ${\mathcal O}_K$. 
If $I$ is an ideal of ${\mathcal O}_K$ then $I$ can be generated by at most two elements of ${\mathcal O}_K$, that is, either $I=\alpha{\mathcal O}_K$ for some element $\alpha \in {\mathcal O}_K$, or $I=\alpha{\mathcal O}_K+\beta{\mathcal O}_K$ where $\alpha$ and $\beta$ are elements of ${\mathcal O}_K$ that are not associates (see Theorem 17, p. 61 of [7], for instance). In the former case we write $I=(\alpha)$ and call  $I$ a principal ideal, in the latter we write $I=(\alpha, \beta)$. Now let $D$ be a square-free integer other than 1 and let $K={\mathbb Q}(\sqrt D)$. In this paper we will obtain a necessary and sufficient condition for a prime ideal of $K$ lying over a prime that splits in $K/\mathbb Q$ to be principal. Recall that if $q$ is an odd prime and $q$ does not divide $D$, then 

$$q{\mathcal O}_K=\begin{cases}
(q, n+\sqrt D)(q, n-\sqrt D)\   \text{if } D\equiv n^2 \pmod q \\ \\
(q) \ \text{ if } D \text{ is not a square mod } q
\end{cases}$$

\noindent where $(q, n+\sqrt D)$, $(q, n-\sqrt D)$, and $(q)$ are prime ideals in ${\mathcal O}_K$ (see Theorem 25, p. 74 of [7], for instance). We will prove the following theorem.

\begin{theorem}
Let $D$ be a square-free integer other than 1 and let $K={\mathbb Q}(\sqrt D)$. Let $\delta \in \{1,2\}$ with $\delta=2$ if $D\equiv 1 \pmod 4$. Suppose $q$ is an odd prime such that $q$ does not divide $D$ and $D\equiv n^2 \pmod q$. Then $n^2-D=lq$ for some $l \in \mathbb Z$ and we have

\begin{enumerate}[(i)]

 \item  if $D<0$ then the prime ideal $\mathcal P=(q, n+\sqrt D)$ is a principal ideal if and only if the binary quadratic form $f_{\mathcal P}(x,y)=lx^2+2nxy+qy^2$ represents $\delta^2$.

\item if $D>0$ then the prime ideal $\mathcal P=(q, n+\sqrt D)$ is a principal ideal if and only if the binary quadratic form $f_{\mathcal P}(x,y)=lx^2+2nxy+qy^2$ represents $\pm \delta^2$.

\end{enumerate}
\end{theorem}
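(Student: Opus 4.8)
The plan is to first turn the principality of $\mathcal P$ into a statement about norms of its elements, and then to read that statement off from the quadratic form $f_{\mathcal P}$. For the first step I would use the standard facts that $\mathcal P=(q,n+\sqrt D)$ is a prime ideal with $\mathcal O_K/\mathcal P\cong\mathbb F_q$, hence of norm $q$, and that $\mathcal P\overline{\mathcal P}=q\mathcal O_K$. Multiplicativity of the ideal norm then gives the reduction I want: $\mathcal P$ is principal if and only if it contains an element $z$ with $|N_{K/\mathbb Q}(z)|=q$. (If $\mathcal P=(z)$ then $|N(z)|=N(\mathcal P)=q$; conversely, if $z\in\mathcal P$ with $|N(z)|=q$, then $(z)\subseteq\mathcal P$ with $[\mathcal O_K:(z)]=q=[\mathcal O_K:\mathcal P]$, so $(z)=\mathcal P$.) Since every nonzero element of $\mathcal O_K$ has positive norm when $D<0$, the condition there is $N(z)=q$, while for $D>0$ it is $N(z)=\pm q$.

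Next I would describe $\mathcal P$ as a lattice in $\mathcal O_K$ and compute the norm form. Set $\mathfrak a:=q\mathbb Z+(n+\sqrt D)\mathbb Z$; using $n^2-D=lq$ one checks that $\{q,\,n+\sqrt D\}$ is a $\mathbb Z$-basis of $\mathfrak a$ and that for $z=qx+(n+\sqrt D)y$ one has
$$N(z)=q\bigl(qx^2+2nxy+ly^2\bigr)=q\,f_{\mathcal P}(y,x).$$
When $D\equiv 2,3\pmod 4$ we have $\mathcal O_K=\mathbb Z[\sqrt D]$ and a short computation (absorbing $q\sqrt D$ and $n\sqrt D+D$ into $\mathfrak a$) shows $\mathcal P=\mathfrak a$, so its elements are precisely these $z$. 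When $D\equiv 1\pmod 4$ we have $\mathcal O_K=\mathbb Z[(1+\sqrt D)/2]$, so $[\mathcal O_K:\mathbb Z[\sqrt D]]=2$ and $2\mathcal O_K\subseteq\mathbb Z[\sqrt D]$; combined with $N(\mathcal P)=q$ this yields $\mathfrak a\subseteq\mathcal P$ with $[\mathcal P:\mathfrak a]=2$ and $2\mathcal P\subseteq\mathfrak a$. Because $2$ is a unit modulo $\mathcal P$, an element $z\in\mathcal O_K$ lies in $\mathcal P$ exactly when $2z\in\mathfrak a$; writing $2z=qx+(n+\sqrt D)y$, the condition $z\in\mathcal O_K$ becomes the parity relation $x\equiv (n-1)y\pmod 2$, and $N(z)=\tfrac14\,q\,f_{\mathcal P}(y,x)$.

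Now I would assemble the two steps. The values $f_{\mathcal P}(y,x)$, as $(x,y)$ runs over $\mathbb Z^2$, are exactly the integers represented by $f_{\mathcal P}$. Hence for $D\equiv 2,3\pmod 4$ (where $\delta=1$), $\mathcal P$ is principal iff $f_{\mathcal P}$ represents $1$ (if $D<0$) or $\pm 1$ (if $D>0$), as claimed. For $D\equiv 1\pmod 4$ (where $\delta=2$), the same argument shows $\mathcal P$ is principal iff $f_{\mathcal P}$ represents $\pm 4$ by some pair satisfying the parity relation, so it remains to see that this relation is no real restriction: if $qx^2+2nxy+ly^2=\pm4$ then reducing mod $2$ gives $x\equiv ly\pmod 2$, while $lq=n^2-D$ with $D\equiv 1\pmod 4$ forces $l\equiv n-1\pmod 2$, whence $x\equiv(n-1)y\pmod 2$ automatically. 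This gives precisely the stated equivalence with $\delta^2=4$.

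I expect the $D\equiv 1\pmod 4$ case to be the main obstacle: making the index-$2$ comparison of $\mathcal P$ with $\mathfrak a$ precise, carrying the factor $\tfrac14$ through the norm computation without error, and checking that the parity relation never blocks a representation of $\pm\delta^2$. Once those are in place, the $D\equiv 2,3\pmod 4$ case is a direct computation. (The fact that $\delta^2$, rather than $1$, appears when $D\equiv 1\pmod 4$ reflects that the discriminant of $f_{\mathcal P}$ is $4D$, the discriminant of the non-maximal order $\mathbb Z[\sqrt D]$.)
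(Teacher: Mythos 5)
Your argument is correct, but it reaches Theorem 1.1 by a genuinely different route from the paper. Both proofs begin with the same reduction: $\mathcal P$ is principal iff it contains an element of norm $\pm q$ (the paper's Lemma 3.1, proved via multiplicativity of the ideal norm; your index argument $[\mathcal O_K:(z)]=|N(z)|=q=[\mathcal O_K:\mathcal P]$ is an equivalent way to get the converse direction). The divergence is in how the set of norms of elements of $\mathcal P$ is computed. The paper parametrizes $\gamma\in\mathcal P$ redundantly as $q\alpha+(n+\sqrt D)\beta$ with $\alpha,\beta\in\mathcal O_K$, i.e., by four integer parameters $a,b,c,d$ (Lemma 2.1), turns $N(\gamma)=\pm q$ into a two-variable quadratic Diophantine equation in $a,b$ with $c,d$ as side parameters, and then solves it by Gauss's substitution from art.~216 of the Disquisitiones, reducing to $qw^2-qDz^2=\pm\delta^2q^4D^2$ followed by a page of congruence manipulation in Section 4 (plus a separate argument that suitable $c,d$ with the right parities can always be chosen). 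You instead exhibit $\mathcal P$ as a rank-2 lattice --- equal to $\mathfrak a=q\mathbb Z+(n+\sqrt D)\mathbb Z$ when $D\equiv 2,3\pmod 4$, and containing $\mathfrak a$ with index $2$ when $D\equiv 1\pmod 4$ --- on which the norm is literally $q\,f_{\mathcal P}$ (resp. $\tfrac14 q\,f_{\mathcal P}$), which is the classical ideal--form correspondence. Your route is shorter and makes the appearance of $\delta^2$ conceptual (the form has discriminant $4D$, the discriminant of the order $\mathbb Z[\sqrt D]$), whereas the paper deliberately avoids lattice/basis language in favor of explicit elementary manipulation. The delicate points in your version --- that $[\mathcal P:\mathfrak a]=2$, that $z\in\mathcal P$ iff $2z\in\mathfrak a$ (using that $2\notin\mathcal P$), and that the parity constraint $x\equiv(n-1)y\pmod 2$ is automatically satisfied by any representation of $\pm 4$ because $l\equiv n-1\pmod 2$ --- are all identified and handled correctly, so I see no gap.
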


\begin{remark}
\normalfont Following the notation and terminology established in art. 153 and art. 154 of [4], we call $D=n^2-lq$ the determinant of the binary quadratic form $f_{\mathcal P}(x,y)=lx^2+2nxy+qy^2$. In the remainder of the paper we will omit the subscript ${\mathcal P}$ on $f$ since no confusion can arise by doing so.
\end{remark}
Examples illustrating the use of Theorem 1.1 are presented in Section 5 below. The methods illustrated in these examples, together with Theorem 6.9 below, are then used in Section 6  to give an elementary proof of the following well-known theorem. The proof reveals some new information regarding necessary conditions for an imaginary quadratic field to have class number 1 when $D\equiv 1 \pmod 4$ (see Proposition 6.4 below).

\begin{theorem}[Baker--Heegner--Stark]
The imaginary quadratic fields with class number 1 are exactly the fields ${\mathbb Q}(\sqrt D)$ where $D$ equals  $-1$, $-2$, $-3$, $-7$, $-11$, $-19$, $-43$, $-67$, or $-163$.
\end{theorem}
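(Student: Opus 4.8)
Throughout assume $D<0$ is square-free, $K=\mathbb{Q}(\sqrt D)$, and write $\Delta$ for the discriminant of $K$ (so $\Delta=D$ if $D\equiv 1\pmod 4$ and $\Delta=4D$ otherwise); only part (i) of Theorem 1.1 is used. The plan is to (a) cut the candidate values of $D$ down to a short list of shapes, (b) bound $|D|$ by an explicit constant, and (c) finish by a finite verification.

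For (a) I would invoke genus theory of binary quadratic forms of determinant $D$ (Gauss, [4]): the number of genera is $2^{t-1}$, where $t$ is the number of distinct primes dividing $\Delta$, and this divides $h(\Delta)$. (Alternatively one argues directly that when $\Delta$ has a small prime divisor $r$ the ramified prime above $r$, of norm $r$, cannot be principal, since a value of the principal form $x^2+|D|y^2$ or $x^2+xy+\tfrac{1+|D|}{4}y^2$ that equals a prime below $|D|/4$ would have to be a perfect square.) Either way $h(\Delta)=1$ forces $t=1$, so $D\in\{-1,-2\}$ or $D=-p$ with $p\equiv 3\pmod 4$ prime, which is exactly the case $\delta=2$. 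The fields $\mathbb{Q}(\sqrt{-1})$ and $\mathbb{Q}(\sqrt{-2})$ lie on the list and have class number $1$ by inspection, so from here on $D=-p$.

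For (b) — the heart of the matter — the key observation is that $h(-p)=1$ forces every prime $q<(p+1)/4$ to be inert in $K$: if such a $q$ were to split then, $h$ being $1$, the prime above it would equal $(\alpha)$, so $q=N(\alpha)=\bigl(a+\tfrac b2\bigr)^2+\tfrac p4 b^2$ for integers $a,b$, which is either $a^2$ (impossible for $q$ prime) or at least $(p+1)/4$; hence $\bigl(\tfrac{-p}{q}\bigr)=-1$ for every prime $q<(p+1)/4$. (Theorem 1.1(i) lets one phrase this purely in terms of the form $f$, which is the point of view taken in the Section 5 examples.) I would then combine this with Theorem 6.9, which I expect to be a lower bound for $h(-p)$: from Dirichlet's class number formula $h(-p)=\tfrac{\sqrt p}{\pi}L(1,\chi_{-p})$ and an elementary Mertens-type estimate (on the $\lfloor (p+1)/4\rfloor$-smooth integers $\chi_{-p}$ agrees with the Liouville function, so the inertness above makes the Euler product $\prod_q\bigl(1-\chi_{-p}(q)/q\bigr)^{-1}$ only as large as $\asymp 1/\log p$, the tail being $1+o(1)$ by P\'olya--Vinogradov), one gets $h(-p)\gg \sqrt p/\log p$. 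Thus $h(-p)=1$ is impossible once $p$ exceeds an explicit bound.

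Finally, for (c) one runs through the primes $p\equiv 3\pmod 4$ below that bound. For each $p\notin\{3,7,11,19,43,67,163\}$, choose a small prime $q$ that splits, i.e.\ with $-p\equiv n^2\pmod q$, put $l=(n^2+p)/q$, and check — a finite computation, since $f(x,y)=lx^2+2nxy+qy^2$ is positive definite — that $f$ does not represent $\delta^2=4$; by Theorem 1.1(i) the ideal $\mathcal P=(q,n+\sqrt{-p})$ is then non-principal, so $h(-p)>1$. For each of the nine $D$ on the list, confirm $h(D)=1$ by checking (Minkowski bound) that the principal form is the only reduced positive-definite form of discriminant $\Delta$. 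I expect step (b) to be the genuine obstacle: (a) is a routine reduction and (c) is a bounded search, but turning ``$\chi_{-p}(q)=-1$ for all small $q$'' into an effective upper bound for $p$ — and one small enough to make (c) feasible — is the delicate point that Theorem 6.9 must supply.
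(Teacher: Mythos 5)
Your steps (a) and (c) are broadly consonant with the paper: the paper disposes of $D\equiv 2,3\pmod 4$ by an elementary non-principality argument (Proposition 6.2, via Theorem 1.1(i) and Lemma 6.1 rather than genus theory), sharpens the reduction in the case $D\equiv 1\pmod 4$ much further than you do (Proposition 6.4: class number 1 forces $|D|=4p-1$ with $p$, $4p-1$, and $4p+3$ all prime and $p\equiv 1,7\pmod{10}$ for $p>5$), and then does a finite check for $p\le 619$ (Corollary 6.7, Lemma 6.8). Your observation that $h=1$ forces every prime $q<(p+1)/4$ to be inert is also correct and is essentially the content of the Rabinowitsch criterion.

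The genuine gap is in your step (b), which you yourself flag as the heart of the matter. First, you have misidentified Theorem 6.9: it is the Frobenius--Rabinowitsch criterion ($h=1$ iff $X^2-X+\frac{1-d}{4}$ takes only prime values for $X=1,\dots,\frac{1-d}{4}-1$), not an analytic lower bound for $h$. Second, the analytic argument you sketch does not close. From the inertness of all primes $q<(p+1)/4$ you can get an effective \emph{upper} bound of the shape $L(1,\chi_{-p})\ll \log p/\sqrt p$ (truncate the Dirichlet series and apply P\'olya--Vinogradov), which is perfectly consistent with $h=1$; what you need is the \emph{lower} bound $L(1,\chi_{-p})\gg 1/\log p$, and that cannot be extracted from a Mertens-type estimate on the partial Euler product plus P\'olya--Vinogradov. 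The Euler product at $s=1$ is only conditionally convergent, its tail $\prod_{q>p/4}(1-\chi(q)/q)^{-1}$ is a product over \emph{primes} that P\'olya--Vinogradov does not control, and an effective bound $h(-p)\gg\sqrt p/\log p$ is precisely the content of the deep theorems this paper is trying to avoid (the possibility of an exceptional zero is exactly why Gauss's class number one problem was hard). Without (b) you have no explicit bound, so (c) cannot be carried out. The paper's elementary substitute is the pair Lemma 6.12 and Lemma 6.13: for $p>619$ with $4p-1$ prime there is an $n$ such that $4n-1$ and $p-n$ are distinct odd primes less than $p$, and since $\left(\frac{p-n}{4p-1}\right)=-\left(\frac{4n-1}{4p-1}\right)$, one of these two primes is a quadratic residue mod $4p-1$, hence splits; Lemma 6.10 then produces a composite value $n^2-n+p$ of the Rabinowitsch polynomial, so $h>1$. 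That sieve-theoretic existence argument is the missing idea your proposal would need.
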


\section{Elements of Ideals and Their Norms}

\noindent Let $(q, n+\sqrt D)$  be as described in the statement of Theorem 1.1. Denote by $N_{K/\mathbb Q}$ the norm of elements from $K$ to $\mathbb Q$. The next lemma provides a description of the elements of $(q, n+\sqrt D)$ and their norms.

\begin{lemma}
Let $\gamma \in (q, n+\sqrt D)$. Then 

\begin{longtable}{ll} 
 $ \displaystyle \gamma = \frac{qa+nc+dD+(qb+c+nd)\sqrt D}{\delta}$,
\end{longtable}
\noindent and 

\begin{longtable}{ll} 
$\displaystyle N_{K/\mathbb Q}(\gamma)=\frac{q^2a^2-q^2Db^2+2q(nc+dD)a-2qD(c+nd)b+(c^2-d^2D)lq}{\delta^2}$ 
\end{longtable}
\noindent where $a,b,c,d \in \mathbb Z$, $\delta \in \{1,2\}$, and $n^2-D=lq$ for some $l\in {\mathbb Z}$. Furthermore, if $D\equiv 1 \pmod 4$, then $\delta = 2$,
and $a\equiv b \pmod 2$, and $ c\equiv d \pmod 2$. 
 
\end{lemma}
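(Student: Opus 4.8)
The plan is to write a general element of the ideal explicitly against an integral basis of $\mathcal{O}_K$, substitute into $N_{K/\mathbb Q}(\gamma)=\gamma\overline{\gamma}$, and simplify using $n^2-D=lq$. First I would recall the two shapes of the ring of integers: when $D\equiv 2,3\pmod 4$ one has $\delta=1$ and $\mathcal{O}_K=\mathbb Z+\mathbb Z\sqrt D$, while when $D\equiv 1\pmod 4$ one has $\delta=2$ and $\mathcal{O}_K=\mathbb Z+\mathbb Z\,\frac{1+\sqrt D}{2}$. In the latter case I would record the reformulation $\mathcal{O}_K=\bigl\{\tfrac{a+b\sqrt D}{2}:a,b\in\mathbb Z,\ a\equiv b\pmod 2\bigr\}$, which follows immediately from $a'+b'\,\tfrac{1+\sqrt D}{2}=\tfrac{(2a'+b')+b'\sqrt D}{2}$. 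This lets both cases be handled uniformly by writing a typical element of $\mathcal{O}_K$ as $\frac{a+b\sqrt D}{\delta}$, with the parity restriction $a\equiv b\pmod 2$ understood precisely when $\delta=2$; that restriction is also exactly the content of the final assertion of the lemma.

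Since $(q,n+\sqrt D)=q\mathcal{O}_K+(n+\sqrt D)\mathcal{O}_K$, any $\gamma$ in the ideal can be written $\gamma=q\alpha+(n+\sqrt D)\beta$ with $\alpha=\frac{a+b\sqrt D}{\delta}$ and $\beta=\frac{c+d\sqrt D}{\delta}$ for suitable $a,b,c,d\in\mathbb Z$ (subject to the parity conditions when $\delta=2$). Using $(n+\sqrt D)(c+d\sqrt D)=(nc+dD)+(c+nd)\sqrt D$ and collecting the rational and the $\sqrt D$ parts yields
\[
\gamma=\frac{(qa+nc+dD)+(qb+c+nd)\sqrt D}{\delta},
\]
which is the first displayed formula.

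For the norm, put $P=qa+nc+dD$ and $Q=qb+c+nd$, so that $N_{K/\mathbb Q}(\gamma)=\dfrac{P^2-Q^2D}{\delta^2}$. Expanding $P^2$ and $Q^2D$, the cross term $2ncdD$ in $P^2$ cancels the term $2cndD$ in $Q^2D$; the terms linear in $a$ combine to $2q(nc+dD)a$ and those linear in $b$ to $-2qD(c+nd)b$; and the remaining terms rearrange as $n^2c^2-c^2D+d^2D^2-n^2d^2D=(c^2-d^2D)(n^2-D)=(c^2-d^2D)lq$, where the last step uses $n^2-D=lq$. Collecting the pieces gives exactly the stated expression for $N_{K/\mathbb Q}(\gamma)$. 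I do not expect a genuine obstacle here: the only things needing care are keeping the bookkeeping uniform across the two values of $\delta$, invoking the integrality conditions $a\equiv b$ and $c\equiv d\pmod 2$ when $D\equiv 1\pmod 4$, and remembering to use $n^2-D=lq$ so that the constant part of the norm takes the advertised form.
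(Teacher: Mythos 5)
Your proposal is correct and follows essentially the same route as the paper: write $\gamma=q\alpha+(n+\sqrt D)\beta$ with $\alpha,\beta\in\mathcal{O}_K$ expressed against the integral basis (with the parity condition when $D\equiv 1\pmod 4$), read off the first formula, then expand $P^2-Q^2D$ and use $n^2-D=lq$ to obtain the norm. The cancellation of the cross terms and the regrouping of the constant part as $(c^2-d^2D)(n^2-D)$ match the paper's computation exactly.
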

 
\begin{proof}

If  $D\equiv 2 \ \text {or}\ 3 \pmod 4$, then
 
\begin{longtable}{ll} 
${\mathcal O}_K=\{r+s\sqrt D: r,s\in \mathbb Z\}$,
\end{longtable}

\noindent and if $D\equiv 1 \pmod 4$, then 

\begin{longtable}{ll} 
$\displaystyle {\mathcal O}_K=\left\{\frac{r+s\sqrt D}{2}: r,s\in \mathbb Z, \ \text{with}\ r\equiv s\pmod 2\right\}$
\end{longtable}

\noindent (see Theorem 3.6, p. 22 of [9], for instance). Hence,

\begin{longtable}{ll} 
$\gamma$&$\displaystyle= q\left( \frac{a+b\sqrt D}{\delta}\right )+(n+\sqrt D)\left (\frac{c+d\sqrt D}{\delta}\right )$ \\ \\
 &$\displaystyle= \frac{qa+nc+dD+(qb+c+nd)\sqrt D}{\delta},$
 \end{longtable}
 
\noindent where $a,b,c,d \in \mathbb Z$, and $\delta \in \{1,2\}$. Furthermore,  if $D\equiv 1 \pmod 4$, then $\delta = 2$, 
 and $a\equiv b \pmod 2$, and $ c\equiv d \pmod 2$. Computing the norm of $\gamma$ we obtain $N_{K/\mathbb Q}(\gamma)$
 
\begin{longtable}{ll}
&$=[(qa+nc+dD)^2-(qb+c+nd)^2D]/\delta^2$ \\ \\
&$=\{[q^2a^2+2qa(nc+dD)+(nc+dD)^2]$ \\ \\
&$-[q^2b^2+2qb(c+nd)+(c+nd)^2]D\}/ \delta^2$ \\ \\
&$=\{[q^2a^2+2qanc+2qadD+n^2c^2+2ncdD+d^2D^2]$ \\ \\ 
&$-[q^2b^2+2qbc+2qbnd+c^2+2cnd+n^2d^2]D\}/ \delta^2$ \\ \\
&$=\{[q^2a^2+2qanc+2qadD+n^2c^2+2ncdD+d^2D^2]$ \\ \\ 
&$-[q^2b^2D+2qbcD+2qbndD+c^2D+2cndD+n^2d^2D]\}/ \delta^2$ \\ \\
&$=\{q^2a^2-q^2Db^2+(2qnc+2qdD)a-(2qcD+2qndD)b$ \\ \\ 
&$+(n^2c^2-c^2D)+(d^2D^2-n^2d^2D)\}/\delta^2$ \\ \\
&$=[q^2a^2-q^2Db^2+2q(nc+dD)a-2qD(c+nd)b$ \\ \\
&$+c^2(n^2-D)+d^2D(D-n^2)]/\delta^2$ \\ \\
&$=[q^2a^2-q^2Db^2+2q(nc+dD)a-2qD(c+nd)b$ \\ \\
&$+(c^2-d^2D)(n^2-D)]/\delta^2$ \\ \\
&$=[q^2a^2-q^2Db^2+2q(nc+dD)a-2qD(c+nd)b$ \\ \\ 
&$+(c^2-d^2D)lq]/\delta^2.$
 \end{longtable}
 
 \end{proof}
  
 \section{A Quadratic Diophantine Equation}

\begin{lemma}

Let the notation be as in the statement of Lemma 2.1. Then $(q, n+\sqrt D)=(\gamma)$ if and only if the quadratic Diophantine equation in the unknowns $x$ and $y$

\begin{equation}
qx^2-qDy^2+2(nc+dD)x-2D(c+nd)y+(c^2-d^2D)l\pm\delta^2=0
\end{equation} 

\noindent has a solution in integers $x=a$ and $y=b$ for some integers $c$ and $d$. Furthermore, if $D\equiv 1 \pmod 4$, then $\delta= 2$, and $a\equiv b \pmod 2$, and $c\equiv d \pmod 2$. 

\end{lemma}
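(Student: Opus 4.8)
The plan is to reduce the ideal equality $(q,n+\sqrt D)=(\gamma)$ to a statement purely about the rational number $N_{K/\mathbb Q}(\gamma)$, and then to unwind that statement using the norm formula supplied by Lemma 2.1. Write $\mathcal P=(q,n+\sqrt D)$. From the factorization of $q{\mathcal O}_K$ recalled in the introduction, $q{\mathcal O}_K=\mathcal P\cdot(q,n-\sqrt D)$ is a product of two distinct prime ideals; applying the ideal norm $\mathfrak N$ gives $q^2=\mathfrak N(\mathcal P)\,\mathfrak N((q,n-\sqrt D))$, and since each factor is a proper ideal dividing $q^2$ it follows that $\mathfrak N(\mathcal P)=q$.

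First I would establish the intermediate claim: for $\gamma\in{\mathcal O}_K$ one has $(\gamma)=\mathcal P$ if and only if $\gamma\in\mathcal P$ and $|N_{K/\mathbb Q}(\gamma)|=q$. The forward implication is immediate from $\mathfrak N((\gamma))=|N_{K/\mathbb Q}(\gamma)|$ together with $\mathfrak N(\mathcal P)=q$. For the converse, $\gamma\in\mathcal P$ gives $(\gamma)\subseteq\mathcal P$, hence $\mathcal P\mid(\gamma)$; writing $(\gamma)=\mathcal P\mathcal A$ and taking norms yields $q=q\,\mathfrak N(\mathcal A)$, so $\mathfrak N(\mathcal A)=1$, whence $\mathcal A={\mathcal O}_K$ and $(\gamma)=\mathcal P$. (The case $\gamma=0$ is harmless: both sides of the claimed equivalence fail.)

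Next I would invoke Lemma 2.1 directly. Membership $\gamma\in\mathcal P$ is equivalent to $\gamma$ having the displayed shape with parameters $a,b,c,d\in\mathbb Z$ (and the stated parities when $D\equiv1\pmod4$), and under this hypothesis the norm formula of Lemma 2.1 can be rewritten, since every summand in its numerator carries a factor of $q$, as
$$\delta^2 N_{K/\mathbb Q}(\gamma)=q\bigl[qa^2-qDb^2+2(nc+dD)a-2D(c+nd)b+(c^2-d^2D)l\bigr].$$
Hence $N_{K/\mathbb Q}(\gamma)=\pm q$ if and only if
$$qa^2-qDb^2+2(nc+dD)a-2D(c+nd)b+(c^2-d^2D)l=\pm\delta^2,$$
which is exactly the statement that $(x,y)=(a,b)$ solves equation (3.1) for the appropriate choice of sign. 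Combining this with the intermediate claim — and observing that the bracketed expression above depends only on $\gamma$, not on the particular choice of $a,b,c,d$ representing it — gives the lemma, with the parity conditions carried over verbatim from Lemma 2.1.

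The one genuinely non-formal ingredient is the intermediate claim, and inside it the single place where the arithmetic of ${\mathcal O}_K$ is used: unique factorization into prime ideals and multiplicativity of the ideal norm, which together yield both $\mathfrak N(\mathcal P)=q$ and the fact that an ideal of norm $1$ equals ${\mathcal O}_K$. Once that is in hand, the remainder is bookkeeping — clearing the denominator $\delta^2$, extracting the common factor $q$, and renaming $(a,b)$ as $(x,y)$ — so I expect no serious obstacle beyond making the norm identities precise.
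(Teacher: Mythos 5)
Your proposal is correct and follows essentially the same route as the paper: both reduce the ideal equality to the condition $N_{K/\mathbb Q}(\gamma)=\pm q$ by taking ideal norms and using that $(\gamma)\subseteq(q,n+\sqrt D)$ forces $(\gamma)=I\,(q,n+\sqrt D)$ with $I$ of norm $1$, and then substitute the norm formula of Lemma 2.1 and cancel the common factor $q$ and the denominator $\delta^2$. The only cosmetic difference is that you derive $\mathfrak N(\mathcal P)=q$ directly from the splitting of $q{\mathcal O}_K$, where the paper cites it from the literature.
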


\begin{proof} 

Suppose $(q, n+\sqrt D)=(\gamma)$. Taking the ideal norm of both sides of this equation and using Corollary 1, p. 142 of [8] we have $(q)=(N_{K/\mathbb Q}(\gamma))$ which implies $N_{K/\mathbb Q}(\gamma)=\pm q$. Conversely, suppose $N_{K/\mathbb Q}(\gamma)=\pm q$. Then $(N_{K/\mathbb Q}(\gamma))=(q)$. Since $(\gamma)\subseteq (q, n+\sqrt D)$ we have $(\gamma)=I(q, n+\sqrt D)$ for some ideal $I$ of ${\mathcal O}_K$ (see Corollary 3, p. 59 of [7], for instance). Taking the ideal norm of both sides of this equation we have $(N_{K/\mathbb Q}(\gamma))=(\alpha)(q)$ for some $\alpha \in \mathbb Z$. Hence, $\alpha =\pm1$ which implies that $I={\mathcal O}_K$, so $(q, n+\sqrt D)=(\gamma)$. We have shown

 \begin{longtable}{l}
$(q, n+\sqrt D)=(\gamma) \Longleftrightarrow (q)=(N_{K/\mathbb Q}(\gamma))\Longleftrightarrow N_{K/\mathbb Q}(\gamma)=\pm q.$
\end{longtable}

\noindent By Lemma 2.1, the last equation is equivalent to

 \begin{longtable}{l}
$q^2a^2-q^2Db^2+2q(nc+dD)a-2qD(c+nd)b+(c^2-d^2D)lq\pm\delta^2q=0$
\end{longtable}

\noindent which is equivalent to

 \begin{longtable}{l}
$qa^2-qDb^2+2(nc+dD)a-2D(c+nd)b+(c^2-d^2D)l\pm\delta^2=0.$ 
\end{longtable}

\noindent Furthermore, if $D\equiv 1 \pmod 4$ then $\delta= 2$ and $a\equiv b \pmod 2$, and  $c\equiv d \pmod 2$. 

\end{proof}

\begin{remark}

\normalfont (i) We note that when $D\equiv 1 \pmod 4$, so $\delta=2$, we have $c\equiv d \pmod 2$. Hence $c^2-d^2\equiv 0 \pmod 2$. Therefore any solution in integers $a$ and $b$ to (3.1) in this case satisfies $a \equiv b \pmod 2$ as is seen by setting $x=a$ and $y=b$ in (3.1) and reducing both sides of this equation modulo 2. (ii) If $D<0$ then all nonzero elements of $K$ have positive norm so we only get the minus sign on $\delta^2$ in (3.1).

\end{remark}

Following art. 216 of [4], to find all integer solutions to (3.1), if any, we first replace the unknowns $x$ and $y$ by new ones $w$ and $z$ defined by

\begin{longtable}{l} 
$ w=q^2Dx+qD(nc+dD)\ \ {\rm and}\ \ z=q^2Dy+qD(c+nd).$
\end{longtable}

\noindent Then we rewrite (3.1) in terms of these new unknowns to obtain

\begin{longtable}{l} 
$qw^2-qDz^2-M=0$
\end{longtable}
\noindent where $-M$

\begin{longtable}{ll} 
&$=[(c^2-d^2D)l\pm\delta^2](q^2D)^2+q^2D\{q[-D(c+nd)]^2$ \\ \\
&$+(-qD)(nc+dD)^2\}$ \\ \\
&$=\{[(c^2-d^2D)l\pm\delta^2]q^2D+q[-D(c+nd)]^2+(-qD)(nc+dD)^2\}q^2D$ \\ \\ 
&$=\{[(c^2-d^2D)l\pm\delta^2]q^2D+qD^2(c+nd)^2+(-qD)(nc+dD)^2\}q^2D$ \\ \\ 
&$=\{[(c^2-d^2D)l\pm\delta^2]q+D(c+nd)^2-(nc+dD)^2\}q^3D^2$ \\ \\ 
&$=\{[(c^2-d^2D)l\pm\delta^2]q+D(c^2+2cnd+n^2d^2)-(n^2c^2+2ncdD$ \\ \\
&$+d^2D^2)\}q^3D^2$ \\ \\ 
&$=\{[(c^2-d^2D)l\pm\delta^2]q+Dc^2+D2cnd+Dn^2d^2-n^2c^2-2ncdD$ \\ \\
&$-d^2D^2\}q^3D^2$ \\ \\ 
&$=\{[(c^2-d^2D)l\pm\delta^2]q+Dc^2-n^2c^2+Dn^2d^2-d^2D^2\}q^3D^2$ \\ \\
&$=\{[(c^2-d^2D)l\pm\delta^2]q+c^2(D-n^2)+Dd^2(n^2-D)\}q^3D^2$ \\ \\
&$=\{[(c^2-d^2D)l\pm\delta^2]q+c^2(-lq)+Dd^2(lq)\}q^3D^2$  \\ \\
&$=\{lc^2-ld^2D\pm\delta^2-lc^2+lDd^2\}q^4D^2$  \\ \\
&$=\pm\delta^2q^4D^2.$    
\end{longtable}

\noindent Now, all integer solutions to (3.1), if any, are given by

\begin{equation}
x=\frac{r-qD(nc+dD)}{q^2D}\ \ {\rm and}\ \  y=\frac{s-qD(c+nd)}{q^2D}
\end{equation}
where $w=r$ and $z=s$ are integers which give a solution to 

\begin{equation}
qw^2-qDz^2=\pm\delta^2q^4D^2.
\end{equation}

\section{Proof of the Theorem 1.1}

\begin{proof} 

Let the notation be as in the statement of Theorem 1.1. We first prove the necessity of the condition.
Suppose $(q, n+\sqrt D)$ is a principal ideal. Then by Lemma 3.1 there are integers $a$ and $b$ such that  $x=a$ and $y=b$ is a solution to (3.1) for some integers $c$ and $d$, with $\delta =2$, and $a\equiv b \pmod 2$, and $c\equiv d \pmod 2$, if $D\equiv 1 \pmod 4$. Moreover, by (3.2) we have
 
\begin{longtable}{l} 
 $\displaystyle a=\frac{r-qD(nc+dD)}{q^2D}\ \ {\rm and}\ \  b=\frac{s-qD(c+nd)}{q^2D}$
\end{longtable}

\noindent where $w=r$ and $z=s$ are integers that give a solution to (3.3). Solving these two equations for $r$ and $s$, respectively, we see that $r=mqD$ and $s=kqD$ for some integers $m$ and $k$. Therefore,

\begin{longtable}{l} 
$\displaystyle a=\frac{mqD-qD(nc+dD)}{q^2D}=\frac{m-(nc+dD)}{q},$
\end{longtable}

\noindent so 

\begin{longtable}{ll} 
&$nc+dD\equiv m \pmod q$ \\ \\
$\Longleftrightarrow$&$nc+d(n^2-lq)\equiv m \pmod q$  \\ \\
$\Longleftrightarrow$&$nc+dn^2\equiv m \pmod q.$ 
\end{longtable}

\noindent Hence,

\begin{equation}
n(c+nd)\equiv m \pmod q. 
\end{equation}

\noindent Also,

\begin{longtable}{l} 
$\displaystyle b=\frac{kqD-qD(c+nd)}{q^2D}=\frac{k-(c+nd)}{q},$
\end{longtable}

\noindent so $c+nd\equiv k \pmod q$. Therefore,

\begin{equation}
n(c+nd)\equiv kn \pmod q.
\end{equation}

\noindent From (4.1) and (4.2) we deduce that $m\equiv kn \pmod q$. So $r=(kn+vq)qD$ for some $v\in \mathbb Z$. Since setting $w=r$ and $z=s$ gives a solution to (3.3) we have

\begin{longtable}{ll}&$q((kn+vq)qD)^2-qD(kqD)^2=\pm\delta^2 D^2q^4$  \\ \\
$\Longleftrightarrow$ & $q(kn+vq)^2(qD)^2-qDk^2(qD)^2=\pm \delta^2 D^2q^4$ \\ \\
$\Longleftrightarrow$ & $(q(kn+vq)^2-qDk^2)(qD)^2=\pm\delta^2 D^2q^4$  \\ \\
$\Longleftrightarrow$ & $((kn+vq)^2-Dk^2)q(qD)^2=\pm\delta^2 D^2q^4$  \\ \\
$\Longleftrightarrow$ & $(k^2n^2+2knvq+v^2q^2-Dk^2)q(qD)^2=\pm\delta^2 D^2q^4$ \\ \\
$\Longleftrightarrow$ & $(k^2n^2-Dk^2+2knvq+v^2q^2)q(qD)^2=\pm\delta^2 D^2q^4$ \\ \\ 
$\Longleftrightarrow$ & $(k^2(n^2-D)+2knvq+v^2q^2)q(qD)^2=\pm\delta^2 D^2q^4$ \\ \\
$\Longleftrightarrow$ & $(k^2(lq)+2knvq+v^2q^2)q(qD)^2=\pm\delta^2 D^2q^4$ \\ \\
$\Longleftrightarrow$ & $(k^2l+2knv+v^2q)q^2(qD)^2=\pm\delta^2 D^2q^4$ \\ \\  
$\Longleftrightarrow$ & $(k^2l+2knv+v^2q)D^2q^4=\pm\delta^2 D^2q^4$ \\ \\
$\Longleftrightarrow$ & $lk^2+2nkv+qv^2=\pm\delta^2.$
\end{longtable}

\noindent Hence, the binary quadratic form $lx^2+2nxy+qy^2$ represents $\pm\delta^2$. We now show that the condition is sufficient. Suppose the binary quadratic form $lx^2+2nxy+qy^2$ represents $\pm\delta^2$. Then there are integers $k$ and $v$ such that $lk^2+2nkv+qv^2=\pm\delta^2$. Hence, by the last series of equivalences, setting $w=(kn+vq)qD$ and $z=kqD$ gives a solution to (3.3). Using these values for $r$ and $s$, respectively, in (3.2) and setting $x=a$ and $y=b$ there we have

\begin{longtable}{ll}$a$&$\displaystyle=\frac{(kn+vq)qD-qD(nc+dD)}{q^2D}
\displaystyle=\frac{qD(kn+vq-nc-dD)}{q^2D}$  \\ \\
&$\displaystyle=\frac{kn+vq-nc-dD}{q} 
\displaystyle=\frac{kn+vq-nc-d(n^2-lq)}{q}$
\end{longtable}

\begin{longtable}{ll}&$\displaystyle=\frac{kn+vq-nc-dn^2+dlq}{q} 
\displaystyle=\frac{kn-n(c+dn)+vq+dlq}{q}$ \\ \\
&$\displaystyle=\frac{kn-n(c+dn)}{q}+\frac{vq+dlq}{q} 
\displaystyle=n\left (\frac{k-(c+nd)}{q}\right )+\frac{vq+dlq}{q}$
\end{longtable}

\noindent and 

\begin{longtable}{ll}$b$&$\displaystyle=\frac{kqD-qD(c+nd)}{q^2D} 
\displaystyle=\frac{qD(k-(c+nd))}{q^2D} 
\displaystyle=\frac{k-(c+nd)}{q}.$
\end{longtable}

\noindent We now show that we can choose $c$ and $d$ so that $a$ and $b$ are integers. Note that this will be the case if $c$ and $d$ are chosen such that 

\begin{longtable}{l}
$k-(c+nd)\equiv 0\pmod q \Longleftrightarrow 
nd\equiv k-c \pmod q.$
\end{longtable}

\noindent Since $q$ does not divide $n$ this congruence has a unique solution $d$ modulo $q$ for any choice 
of $c$. Moreover, if $D\equiv 1\pmod 4$ we can choose the solution $d$ so that $d\equiv c \pmod 2$ since by the Chinese Remainder Theorem the following system of congruences has a unique solution $d$ modulo $2q$ for any choice of $c$

\begin{longtable}{ll}$x\equiv n^{-1}(k-c) \pmod q$ \\ \\
$x\equiv c \pmod 2.$\end{longtable}

\noindent By Remark 3.2 (i) $c\equiv d \pmod 2$ guarantees $a\equiv b \pmod 2$. A choice of integers $c$ and $d$ according to the procedure just described then determines a solution $x=a$ and $y=b$ to (3.1). Hence, by Lemma 3.1, $(q, n+\sqrt D)=(\gamma)$ where $\gamma$ is given by
Lemma 2.1.

\end{proof}

\section{Some Examples}

\begin{example} 

\normalfont Consider $K={\mathbb Q}(\sqrt 5)$. Since $5\not \equiv 0\pmod{101}$ and $(45)^2-5=20\cdot101$, the prime 101 splits in $K$. We have $101{\mathcal O}_K=(101, 45+\sqrt 5)(101, 45-\sqrt 5)$. Since $K$ has class number 1, the ideal  $(101, 45+\sqrt 5)$ is principal. We will find a generator for it. Since $5\equiv 1\pmod 4$ we have $\delta=2$, so following Theorem 1.1 (ii) we consider the equation
$20x^2+2(45)xy+101y^2=\pm4$. When the right-hand side of this equation is 4 we find that $x=-4$ and $y=2$ gives a solution to the equation. Following the proof of the sufficiency of the condition of Theorem 1.1, we take $k=-4$ and $v=2$ so 

\begin{longtable}{l}
$\displaystyle a=45\left (\frac{-4-(c+45\cdot d)}{101}\right )+\frac{2\cdot 101+d\cdot 20\cdot 101}{101}$
\end{longtable}

\noindent and

\begin{longtable}{l}
$\displaystyle b=\frac{-4-(c+45\cdot d)}{101}.$
\end{longtable}

\noindent Since $5\equiv 1\pmod 4$ we must have $c\equiv d \pmod 2$. Choosing $c=0$ we then need to solve the following system of congruences for $d$ 

\begin{longtable}{l}$d\equiv 45^{-1}(-4) \pmod {101}$ \\ \\
$d\equiv 0 \pmod 2,$\end{longtable}

\noindent that is, we need to solve the system 

\begin{longtable}{l}$d\equiv 65 \pmod {101}$ \\ \\
$d\equiv 0 \pmod 2.$\end{longtable}

\noindent Choosing the solution $d=166$ we have

\begin{longtable}{l}
$\displaystyle a=45\left (\frac{-4-(0+45\cdot 166)}{101}\right )+\frac{2\cdot 101+166\cdot 20\cdot 101}{101}=-8$
\end{longtable}

\noindent and

\begin{longtable}{l}
$\displaystyle b=\frac{-4-(0+45\cdot 166)}{101}=-74.$
\end{longtable}

\noindent Finally, from Lemma 2.1,
 
\begin{longtable}{l}
$\displaystyle \gamma = \frac{101(-8)+166\cdot 5+(101(-74)+45\cdot 166)\sqrt 5}{2}=\frac{22-4\sqrt 5}{2}.$
\end{longtable}

\noindent Since $N_{K/\mathbb Q}(\gamma)=101$,  we have $(101, 45+\sqrt 5)=(\gamma)$ by the proof of Lemma 3.1.

\end{example}

\begin{example}

\normalfont Consider $K={\mathbb Q}(\sqrt {10})$. Since $10\not \equiv 0\pmod{71}$ and $9^2-10=1\cdot 71$, the prime 71 splits in $K$. We have $71{\mathcal O}_K=(71, 9+\sqrt {10})(71, 9-\sqrt {10})$. It is easy to see that $(71, 9+\sqrt{10})=( 9+\sqrt{10})$. We will now recover this fact using Theorem 1.1. Since $10\equiv 2\pmod 4$ we have $\delta=1$, so following Theorem 1.1 (ii) we consider the equation
$x^2+2(9)xy+71y^2=\pm 1$. When the right-hand side of this equation is 1 we find that $x=1$ and $y=0$ gives a solution to the equation. Hence, by Theorem 1.1 (ii), the ideal $(71, 9+\sqrt {10})$ is principal. We will now find a generator for it. Following the proof of the sufficiency of the condition of Theorem 1.1, we take $k=1$ and $v=0$ so 

\begin{longtable}{l}
$\displaystyle a=9\left (\frac{1-(c+9\cdot d)}{71}\right )+\frac{0\cdot 71+d\cdot 1\cdot 71}{71}$
\end{longtable}

\noindent and

\begin{longtable}{l}
$\displaystyle b=\frac{1-(c+9\cdot d)}{71}.$
\end{longtable}

\noindent Since $10\equiv 2\pmod 4$ we only solve the congruence 

\begin{longtable}{l}
$9d\equiv 1-c \pmod {71}$
\end{longtable}

\noindent for $d$ for any choice of $c$. Taking $c=1$ we obtain $d=0$. So

\begin{longtable}{l}
$\displaystyle a=9\left (\frac{1-(1+9\cdot 0)}{71}\right )+\frac{0\cdot 71+0\cdot 1\cdot 71}{71}=0$
\end{longtable}

\noindent and

\begin{longtable}{l}
$\displaystyle b=\frac{1-(1+9\cdot 0)}{71}=0.$
\end{longtable}

\noindent Finally, from Lemma 2.1,

\begin{longtable}{l}
$\displaystyle \gamma = \frac{71\cdot 0+9\cdot 1+0\cdot 10+(71\cdot 0+1+9\cdot 0)\sqrt {10}}{1}=9+\sqrt{10}.$
\end{longtable}

\noindent Since $N_{K/\mathbb Q}(\gamma)=71$,  we have $(71, 9+\sqrt{10})=(\gamma)$ by the proof of Lemma 3.1.

\end{example}
 
\begin{example}

\normalfont Consider $K={\mathbb Q}(\sqrt {-5})$. Since $-5\not \equiv 0\pmod{47}$ and $(18)^2+5=7\cdot47$, the prime 47 splits in $K$. We have $47{\mathcal O}_K=(47, 18+\sqrt {-5})(47, 18-\sqrt {-5})$. We will now determine if $(47, 18+\sqrt {-5})$ is principal, and, if so, find a generator for it. Since $-5\equiv 3\pmod 4$ we have $\delta=1$, so following Theorem 1.1 (i) we consider the equation $7x^2+2(18)xy+47y^2=1$. Suppose that $x=a$ and $y=b$ is a solution in integers to this equation. Then $a$ and $b$ must be relatively prime. Moreover,  $f(x,y)=7x^2+2(18)xy+47y^2$ must be properly equivalent to $g(x,y)=x^2+5y^2$ by art. 155 and art. 168 of [4]. However, $f(1,0)=7$ whereas $g(x,y)=7$ has no integer solutions since the congruence $x^2+y^2\equiv 3\pmod 4$ has none. It follows that $f$ cannot be properly equivalent to $g$ by art. 166 of [4]. Therefore $7x^2+2(18)xy+47y^2=1$ has no solutions in integers $x$ and $y$. Hence, by Theorem 1.1 (i) the ideal $(47, 18+\sqrt{-5})$ is not principal. 

\end{example}

\begin{example}

\normalfont Consider $K={\mathbb Q}(\sqrt {-23})$. Since $-23\not \equiv 0\pmod 3$ and $1^2+23=8\cdot3$, the prime 3 splits in $K$. We have $3{\mathcal O}_K=(3, 1+\sqrt {-23})(3, 1-\sqrt {-23})$. Since $-23\equiv 1\pmod 4$ we have $\delta=2$. Hence, by Theorem 1.1 (i), $(3, 1+\sqrt{-23})$ is principal if and only if $f(x,y)=8x^2+2(1)xy+3y^2$ represents 4. First, notice that there do not exist integers $a$ and $b$, necessarily relatively prime, such that $f(a,b)=1$. For if so, then by art. 155 and art. 168 of [4], $f$ is properly equivalent to $g(x,y)=x^2+23y^2$. However, this is not possible by art. 166 of [4] since $f(0,1)=3$, but $g$ cannot represent 3. Now suppose there are integers $a$ and $b$, not relatively prime, such that $f(a,b)=4$. Then the greatest common divisor of $a$ and $b$ must be 2. Hence, $a=2a'$ and $b=2b'$ where $a'$ and $b'$ are relatively prime. But then $f(a',b')=1$, a contradiction. So if there are integers $a$ and $b$ such that $f(a,b)=4$, then $a$ and $b$ are relatively prime. Suppose this is the case. Then by art. 155 and art. 168 of [4] either $f$ is properly equivalent to $h(x,y)=4x^2+2(1)xy+6y^2$, or $f$ is properly equivalent to $k(x,y)=4x^2+2(-1)xy+6y^2$. But neither is possible by art. 161 of [4] since the greatest common divisor of the coefficients of $h$ is 2, and likewise for $k$, but 2 does not divide each coefficient of $f$. So $f$ cannot represent 4. Hence, $(3, 1+\sqrt{-23})$ is not principal. This can also be shown as follows. We have $f(x,y)=8x^2+2(1)xy+3y^2$ represents 4 if and only if $F(x,y)=3x^2+2(1)xy+8y^2$ represents 4. Suppose there are integers $a$ and $b$ such that $F(a,b)=4$. Then by Lemma 6.1 below, $b=0$. But then $3a^2=4$, a contradiction. Hence $f$ cannot represent 4, so $(3, 1+\sqrt{-23})$ is not principal.

\end{example}

\section{An Elementary Proof of Theorem 1.3}
 
\noindent  Using the methods illustrated in the examples above, together with Theorem 6.9 below, we can obtain an elementary proof of Theorem 1.3, which was proved in [1], [6], and [12] using methods from analytic number theory or transcendental number theory. Aspects of the interesting history of these proofs can be found in [2], [5], [12], and [13], for instance.
 
\begin{lemma}

Let $f(x,y)=ax^2+2bxy+cy^2$ be a binary quadratic form with integer coefficients such that $a>0$ and $d=b^2-ac<0$. Suppose $M,r$, and $s$ are integers such that $M>0$ and $f(r,s)\leq M$. Then

\begin{longtable}{l}
$\displaystyle -\sqrt{\frac{Ma}{|d|}}\leq s \leq \sqrt{\frac{Ma}{|d|}}.$
\end{longtable}

\end{lemma}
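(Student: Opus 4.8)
The plan is to complete the square in the variable $x$, turning $f(r,s)$ into a sum of a nonnegative term and a term that is a positive multiple of $s^2$, and then use the hypothesis $f(r,s)\le M$ to bound $s^2$. Concretely, since $a>0$ I would write
\[
a\,f(r,s) = a^2r^2 + 2abrs + acs^2 = (ar+bs)^2 + (ac-b^2)s^2 = (ar+bs)^2 + |d|\,s^2,
\]
using $d=b^2-ac<0$ so that $ac-b^2=|d|>0$. This identity is the crux of the argument.

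From here the inequality is immediate: since $(ar+bs)^2\ge 0$ we get $|d|\,s^2 \le a\,f(r,s)$. Now I would invoke the hypotheses $f(r,s)\le M$ and $a>0$ to conclude $a\,f(r,s)\le aM$ (note $f(r,s)$ could be negative or zero here, but the chain of inequalities still goes through since $a>0$), hence $|d|\,s^2 \le aM$, i.e. $s^2 \le Ma/|d|$. Since $M>0$, $a>0$, $|d|>0$, the right-hand side is a nonnegative real number, so taking square roots gives $|s|\le \sqrt{Ma/|d|}$, which is exactly the claimed two-sided bound
\[
-\sqrt{\frac{Ma}{|d|}}\le s \le \sqrt{\frac{Ma}{|d|}}.
\]

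I do not anticipate a genuine obstacle; the whole proof is the completing-the-square identity plus a one-line estimate. The only point requiring a moment's care is the sign bookkeeping: one must multiply $f(r,s)$ by $a$ (not divide), and one must observe that $ac-b^2$ equals $|d|$ rather than $d$ because $d<0$. A secondary point worth stating explicitly is that the bound holds regardless of the sign of $f(r,s)$ itself — we never need $f(r,s)\ge 0$, only $f(r,s)\le M$ — so the lemma applies in particular when $f$ represents small values like $\pm\delta^2$, which is presumably how it will be used in the proof of Theorem 1.3.
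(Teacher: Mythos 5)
Your proof is correct and is essentially the same as the paper's: both complete the square to get a positive multiple of $f(r,s)$ equal to a perfect square plus $|d|s^2$ (the paper uses $4a\,f(r,s)=(2ar+2bs)^2+4|d|s^2$, you use $a\,f(r,s)=(ar+bs)^2+|d|s^2$), then drop the square term and use $f(r,s)\leq M$ with $a>0$. No substantive difference.
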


\begin{proof}

Following the proof of Theorem 6.24 of [11] we have

\begin{longtable}{l}
$\displaystyle f(r,s)=ar^2+2brs+cs^2=\frac{1}{4a}[(2ar+2bs)^2+4|d|s^2].$
\end{longtable}

\noindent So if $f(r,s)\leq M$, then

\begin{longtable}{l}
$(2ar+2bs)^2+4|d|s^2\leq 4Ma$
\end{longtable} 

\noindent which implies

\begin{longtable}{l}
$4|d|s^2\leq 4Ma$
\end{longtable}

\noindent if and only if

\begin{longtable}{l}
$\displaystyle s^2\leq \frac{Ma}{|d|}.$
\end{longtable}

\noindent Hence,

\begin{longtable}{l}
$\displaystyle -\sqrt{\frac{Ma}{|d|}}\leq s \leq \sqrt{\frac{Ma}{|d|}}.$
\end{longtable}

\end{proof}

\begin{proposition}
 Let $D$ be a square-free integer other than 1 such that $D<0$, $|D|>2$, and either $D\equiv 2 \pmod 4$ or $D\equiv 3 \pmod 4$. Let $K={\mathbb Q}(\sqrt D)$. Then $K$ has class number greater than 1.
 
 \end{proposition}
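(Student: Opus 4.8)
The plan is to exhibit a specific prime $q$ that splits in $K$ but whose prime factor above it fails to be principal, and to detect the failure using Theorem 1.1 together with Lemma 6.1. Since $D \equiv 2$ or $3 \pmod 4$, we have $\delta = 1$, so by Theorem 1.1(i) a split prime ideal $\mathcal P = (q, n + \sqrt D)$ is principal if and only if the form $f(x,y) = lx^2 + 2nxy + qy^2$ (of determinant $D$) represents $1$. The strategy is to pick $q$ small enough and in a suitable residue class so that the associated form $f$, or rather its "reverse" $F(x,y) = qx^2 + 2nxy + ly^2$, has leading coefficient $q > 1$ too large for $F$ to represent $1$ once Lemma 6.1 pins down the $y$-coordinate.

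**First I would** handle the choice of $q$. I expect the natural candidate is $q = 2$, or the smallest split odd prime, depending on the congruence class of $D$ mod small numbers. Concretely: if $-D$ is a perfect square mod some small prime $q$ with $1 < q < |D|$ (equivalently $q$ splits), write $n^2 - D = lq$ with $n$ chosen so that $0 \le n < q$; then $l = (n^2 - D)/q$, and since $D < 0$ we have $l > 0$. The form $f(x,y) = lx^2 + 2nxy + qy^2$ has discriminant data $n^2 - lq = D < 0$, and $f$ represents $1$ iff the equivalent form $F(x,y) = qx^2 + 2nxy + ly^2$ represents $1$ (swap the roles of $x$ and $y$, as done in Examples 5.3 and 5.4). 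Now apply Lemma 6.1 to $F$ with $a = q$, $b = n$, $c = l$, $d = b^2 - ac = n^2 - ql = D$, and $M = 1$: any integer solution of $F(x,y) = 1$ satisfies $|y| \le \sqrt{q/|D|}$. Since $q < |D|$ forces $\sqrt{q/|D|} < 1$, we conclude $y = 0$, whence $qx^2 = 1$, impossible for $q > 1$. Therefore $f$ does not represent $1$, so $\mathcal P$ is not principal, so $K$ has class number greater than $1$.

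**The hard part will be** guaranteeing that some split prime $q$ with $1 < q < |D|$ actually exists — i.e. that $K$ genuinely has a non-inert rational prime below $|D|$. For $|D|$ large this is easy (e.g. by quadratic reciprocity and counting, or simply because $2$ or $3$ or $5$ will split), but since the proposition only requires $|D| > 2$ there may be a handful of small cases ($D = -1$ is excluded by $|D| > 2$; $D = -2$ excluded; the remaining small ones like $D = -5, -6, -7, \dots$) where the existence of such a $q$ needs to be checked by hand or where the split prime might equal a ramified prime and must be avoided. I would isolate those finitely many small $D$ and verify each directly — for instance $D = -5$: $-5 \equiv 4 \pmod 3$ wait $-5 \equiv 1 \pmod 3$, so $-5$ is a square mod $3$, $q = 3 < 5$, and Example 5.3-style reasoning kills it. In general I expect $q = 3$ or $q = 5$ to work for all but possibly one or two values, and the write-up will spend most of its length on a clean uniform statement of "why a small split prime exists" plus a short case list. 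A secondary subtlety is ensuring $q \ne 2$ when invoking Theorem 1.1, since that theorem is stated only for odd $q$; if the only small split prime is $2$ one must argue separately (but $D \equiv 2, 3 \pmod 4$ makes $2$ ramified or inert in the relevant cases, so this should not actually arise, and I would note that explicitly).
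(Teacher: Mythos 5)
Your detection mechanism is essentially the paper's: pass to a form whose leading coefficient exceeds $1$, apply Lemma 6.1 with $M=1$ to force $y=0$, and conclude that the form cannot represent $1$, so the ideal is non-principal by Theorem 1.1(i). But the step you defer as ``the hard part'' --- producing an odd split prime $q$ with $q\nmid D$ and $1<q<|D|$ --- is precisely the content of the proof, and the remedies you sketch do not close it. The claim that ``$2$ or $3$ or $5$ will split'' for large $|D|$ is false: $2$ is ramified here (it divides the discriminant $4D$), and by the Chinese Remainder Theorem one can choose $D$ so that $D$ is a nonresidue modulo every prime up to any fixed bound, so no finite list of candidate primes works uniformly; and ``quadratic reciprocity and counting'' would require non-elementary character-sum input. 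So as written the argument has a genuine gap for infinitely many $D$, not just a finite list of small ones.

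The paper closes this gap with a constructive trick you should adopt: take $n=2$ when $D\equiv 2\pmod 4$ and $n=1$ when $D\equiv 3\pmod 4$, and factor $n^2-D=n^2+|D|$. In either case $n^2+|D|\equiv 2\pmod 4$, so it is twice an odd number $\geq 3$ (using $|D|>2$), hence $n^2+|D|=lq$ with $q$ an odd prime and $l$ even, so $l\geq 2$; moreover $q\nmid D$ (else $q\mid n^2$ with $q$ odd), so $q$ splits, and a short inequality gives $l<|D|$. The paper then applies Lemma 6.1 directly to $f(x,y)=lx^2+2nxy+qy^2$ (leading coefficient $l$, with $1<l<|D|$) to get $b=0$ and the contradiction $la^2=1$; this is a cosmetic difference from your use of the reversed form $F$ with leading coefficient $q$, and your version would also work since the constructed $q$ satisfies $q\leq(n^2+|D|)/2<|D|$. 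The essential point you are missing is that the split prime is not found by searching among small primes but is manufactured from the value $n^2+|D|$ of the principal form, with the parity of that value ($\equiv 2\pmod 4$) guaranteeing the cofactor $l$ exceeds $1$.
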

 
\begin{proof}
 
First assume $D\equiv 2 \pmod 4$. If $4+|D|$ is not divisible by an odd prime then $4+|D|=2^m$ for some positive integer $m$. But then we have $2\equiv 2^m\pmod 4$ so $m=1$, a contradiction. Hence, there are positive integers $l$ and $q$ such that $q$ is an odd prime and $4+|D|=lq$. Since $D\not \equiv 0\pmod q$ and $2^2-D=lq$, we have $q{\mathcal O}_K=(q, 2+\sqrt D)(q, 2-\sqrt D)$. Since $2\equiv lq \pmod 4$ it follows that  $l \not \equiv 1 \pmod 4$. Also, we must have $l<|D|$ for if $l\geq |D|$ then $4+|D|=lq\geq|D|q\geq |D|3$ so $4\geq |D|2$ and hence $2\geq|D|$ which is a contradiction. Thus $1<l<|D|$. Now consider $f(x,y)=lx^2+2(2)xy+qy^2$. By Theorem 1.1 (i) the ideal $(q, 2+\sqrt D)$ is principal if and only if $f$ represents 1. Suppose there are integers $a$ and $b$ such that $f(a,b)=1$. Then by Lemma 6.1, $b=0$. So $la^2=1$ which is a contradiction since $l>1$. Hence, $(q, 2+\sqrt D)$ is not principal, so $K$ has class number greater than 1. An analogous argument proves the other case. Assume $D\equiv 3 \pmod 4$. Then $|D|\equiv 1 \pmod 4$. If $1+|D|$ is not divisible by an odd prime then $1+|D|=2^m$ for some positive integer $m$. But then we have $2\equiv 2^m\pmod 4$ so $m=1$, a contradiction. Hence, there are positive integers $l$ and $q$ such that $q$ is an odd prime and $1+|D|=lq$. Since $D\not \equiv 0\pmod q$ and $1^2-D=lq$ we have $q{\mathcal O}_K=(q, 1+\sqrt D)(q, 1-\sqrt D)$.
Since $2\equiv lq \pmod 4$ it follows that $l \not \equiv 1 \pmod 4$. Also, we must have $l<|D|$ for if $l\geq |D|$ then $1+|D|=lq\geq|D|q\geq |D|3$ so $1\geq |D|2$ and hence $1/2\geq|D|$ which is a contradiction. So we have $1<l<|D|$. Now consider $f(x,y)=lx^2+2(1)xy+qy^2$. By Theorem 1.1 (i) the ideal $(q, 1+\sqrt D)$ is principal if and only if $f$ represents 1. Suppose there are integers $a$ and $b$ such that $f(a,b)=1$. Then by Lemma 6.1, $b=0$. So $la^2=1$ which is a contradiction since $l>1$. Hence, $(q, 1+\sqrt D)$ is not principal, so $K$ has class number greater than 1.
 
\end{proof}

\begin{corollary}

Let $D$ be a square-free integer other than 1 such that $D<0$ and either $D\equiv 2 \pmod 4$ or $D\equiv 3 \pmod 4$. Then $K={\mathbb Q}(\sqrt D)$ has class number 1 if and only if $D=-1$ or $D=-2$.

\end{corollary}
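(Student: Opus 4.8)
The plan is to deduce both implications almost immediately: the forward one from Proposition 6.2, and the backward one from the classical fact that $\mathbb{Q}(\sqrt{-1})$ and $\mathbb{Q}(\sqrt{-2})$ have class number $1$.

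For the forward implication, suppose $K=\mathbb{Q}(\sqrt D)$ has class number $1$, where $D<0$ is square-free with $D\equiv 2$ or $D\equiv 3\pmod 4$. Proposition 6.2 asserts that every such $D$ with $|D|>2$ gives a field of class number greater than $1$; taking the contrapositive, the hypothesis forces $|D|\le 2$. Since $D$ is a negative square-free integer, the only possibilities are $D=-1$ and $D=-2$, and both satisfy the congruence condition of the corollary ($-1\equiv 3$ and $-2\equiv 2\pmod 4$), so neither is excluded by hypothesis. This disposes of one direction.

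For the backward implication I would verify directly that $\mathbb{Q}(\sqrt{-1})$ and $\mathbb{Q}(\sqrt{-2})$ have class number $1$. The most self-contained route, consistent with the elementary spirit of the paper, is to show that $\mathbb{Z}[i]$ and $\mathbb{Z}[\sqrt{-2}]$ are Euclidean with respect to $|N_{K/\mathbb{Q}}|$. Given $\alpha,\beta\in\mathcal{O}_K$ with $\beta\ne 0$, write $\alpha/\beta=x+y\sqrt D\in K$ with $x,y\in\mathbb{Q}$, round $x$ and $y$ to nearest integers $m,k$, and set $\kappa=m+k\sqrt D$; then $|N_{K/\mathbb{Q}}(\alpha/\beta-\kappa)|=(x-m)^2+|D|(y-k)^2\le\tfrac14+\tfrac{|D|}{4}$, which equals $\tfrac12<1$ for $D=-1$ and $\tfrac34<1$ for $D=-2$. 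Hence $\mathcal{O}_K$ is a Euclidean domain, so a principal ideal domain, so $K$ has class number $1$. (Alternatively one may invoke the Minkowski bound $\tfrac{2}{\pi}\sqrt{|d_K|}$, which is less than $2$ for the discriminants $d_K=-4$ and $d_K=-8$, forcing every ideal class to contain an ideal of norm $1$; or one may use Theorem 1.1 together with the two-square theorem and the representability of split primes by $x^2+2y^2$.)

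I do not expect a genuine obstacle: the forward direction is a restatement of Proposition 6.2, and the backward direction rests on a standard elementary computation. The only point requiring care is the bookkeeping noted above — confirming that $D=-1$ and $D=-2$ really meet the congruence hypotheses of the corollary, so that the stated equivalence is not vacuous on the $D=-1,-2$ side.
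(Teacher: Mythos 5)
Your proposal is correct and follows essentially the same route as the paper: the forward implication is read off from Proposition 6.2 exactly as the paper does, and the converse is a routine verification that ${\mathbb Q}(\sqrt{-1})$ and ${\mathbb Q}(\sqrt{-2})$ have class number 1. The only difference is cosmetic: the paper cites Minkowski's bound for that verification, while you primarily argue that ${\mathbb Z}[i]$ and ${\mathbb Z}[\sqrt{-2}]$ are norm-Euclidean (hence PIDs), mentioning Minkowski's bound only as an alternative; both verifications are standard and valid.
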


\begin{proof}

By Proposition 6.2 if $K$ has class number 1, then $D=-1$ or $D=-2$. Conversely, if $D=-1$ or $D=-2$, then, using Minkowski's bound (see Corollary 2, p. 136 of [7], for instance), one verifies that $K$ has class number 1.

\end{proof}

\begin{proposition}

Let $D$ be a square-free integer other than 1 such that $D<0$ and $D\equiv 1 \pmod 4$. Let $K={\mathbb Q}(\sqrt D)$ and assume $|D|>16$. Then $K$ has class number 1 only if  $|D|=4p-1$ where $p$ is an odd prime such that $p\geq 5$, and $4p-1$ and $4p+3$ are prime. Moreover, if $p>5$, then either $p\equiv 1 \pmod{10}$ or  $p\equiv 7 \pmod {10}$.
 
\end{proposition}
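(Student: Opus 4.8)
The plan is to assume $K$ has class number $1$ and to extract each listed condition separately, always by the same device: whenever a condition fails I exhibit an odd prime $q$ that splits in $K$ together with the form $f$ attached to it by Theorem~1.1 and show that $f$ cannot represent $\delta^2=4$ (so a prime ideal over $q$ is non‑principal, a contradiction). For the prime $2$ and for ramified primes, which fall outside the scope of Theorem~1.1, I argue directly that a generator would have too small a norm. Since $D\equiv 1\pmod 4$ we have $\delta=2$ and $|D|\equiv 3\pmod 4$, so $P:=(|D|+1)/4$ is a positive integer; the recurring tool is Lemma~6.1, which for a form $f(x,y)=lx^2+2nxy+qy^2$ of determinant $D<0$ with $l>0$ gives $|b|\le 2\sqrt{l/|D|}$ for any representation $f(a,b)=4$, and hence forces $b=0$ (whence $la^2=4$) as soon as $l<|D|/4$.

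First I would show $2$ is inert. If not then $D\equiv 1\pmod 8$ and a prime over $2$ would have a generator $\tfrac{a+b\sqrt D}{2}$ with $a\equiv b\pmod 2$ and $a^2+|D|b^2=8$, impossible for $|D|>16$. Hence $|D|\equiv 3\pmod 8$, $P$ is odd, and $P\ge 5$ since $|D|\ge 19$. Next I would show $P$ is prime. If $P$ were composite, let $q$ be its smallest (odd) prime factor; then the cofactor $P/q$ is an odd integer $\ge 3$, so $l:=4P/q\ge 12$. Since $4P=1-D$, we have $q\nmid D$ (else $q\mid 1$) and $D\equiv 1\pmod q$, so $q$ splits with $n=1$ and $n^2-D=4P=lq$; by Theorem~1.1(i), $(q,1+\sqrt D)$ is principal iff $f(x,y)=lx^2+2xy+qy^2$ represents $4$. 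Lemma~6.1 with $l\le 4P/3<|D|$ gives $|b|<2$, so $b\in\{0,\pm 1\}$, and plugging each into $f(a,b)=4$ with $l\ge 12$ leaves no solution — a contradiction. Thus $p:=P$ is an odd prime $\ge 5$ and $|D|=4p-1$.

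To get that $|D|=4p-1$ is prime, suppose not; being odd and square‑free it has a smallest prime factor $r\le\sqrt{|D|}<|D|/4$ (here $|D|>16$ is exactly what is used) with $r\mid D$, so $r$ ramifies and a generator of the ramified prime would give $a^2+|D|b^2=4r<|D|$, forcing $b=0$ and $a^2=4r$, impossible. To get that $4p+3$ is prime, suppose not; it is odd and $3\nmid 4p+3$ (since $4p+3\equiv p\not\equiv 0\pmod 3$ for $p\ge 5$), so its smallest prime factor $q$ is $\ge 5$. Since $4p+3=4-D$, we have $q\nmid D$ and $D\equiv 2^2\pmod q$, so $q$ splits with $n=2$, $n^2-D=4p+3=lq$, and $l=(4p+3)/q$ is an odd integer $\ge 3$. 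By Theorem~1.1(i), $(q,2+\sqrt D)$ is principal iff $f(x,y)=lx^2+4xy+qy^2$ represents $4$; but $l\le (4p+3)/5<(4p-1)/4=|D|/4$, so Lemma~6.1 forces $b=0$ and then $la^2=4$ with $l\ge 3$ odd — impossible.

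Finally, for $p>5$ I would prove the congruence condition by excluding $p\equiv 3$ and $p\equiv 4\pmod 5$. If $p\equiv 4\pmod 5$ then $5\mid 4p-1=|D|$, so $5$ ramifies, and as in the previous step (with $5<|D|/4$ since $|D|>20$) the ramified prime over $5$ is non‑principal. If $p\equiv 3\pmod 5$ then $5\mid 4p+3=4-D$, so $D\equiv 2^2\pmod 5$ and $5$ splits with $n=2$; the associated form $f(x,y)=lx^2+4xy+5y^2$ with $l=(4p+3)/5$ satisfies $l<|D|/4$ and $l>4$ (as $p>5$), so Lemma~6.1 again gives $b=0$ and $la^2=4$ is impossible. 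Hence $p\equiv 1$ or $2\pmod 5$, which with $p$ odd yields $p\equiv 1$ or $7\pmod{10}$. I expect the fiddliest point to be the ``$P$ prime'' step in the case $q=3\mid P$, where Lemma~6.1 does not by itself give $b=0$ and one must rule out $b=\pm 1$ by hand; more generally the real work is choosing the correct split prime $q$ in each situation, checking $q\nmid D$ so Theorem~1.1 applies, and verifying that the resulting form truly misses $4$ — while the auxiliary non‑principality claims for $2$ and for ramified primes are elementary but must be handled outside Theorem~1.1.
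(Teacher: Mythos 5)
Your proposal is correct, and it reaches the same conclusion by a genuinely different decomposition. The paper factors $4+|D|=lq$ first and runs a three-layer case analysis: on $q\bmod 4$, then (when $4+|D|$ is itself prime, so the associated ideal is principal and yields no contradiction) it falls back to factoring $1+|D|$, and when $1+|D|$ is a power of $2$ it falls back further to $9+|D|$ with $n=3$; the fact that $|D|$ must be prime is imported from genus theory. You instead front-load two facts proved by direct norm computations outside the scope of Theorem 1.1: that $2$ is inert (so $(1+|D|)/4$ is an odd integer, which kills the paper's power-of-two subcase before it arises) and that $|D|$ is prime (via the ramified prime over the smallest prime factor of $|D|$, replacing the genus-theory citation with an elementary argument). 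With those in hand you treat $(1+|D|)/4$ and $4+|D|$ independently, each time taking $q$ to be the \emph{smallest} prime factor of the quantity in question so that the cofactor $l$ is large and $la^2=\delta^2$ is visibly impossible. The one place your route costs something is the ``$P$ prime'' step: since $q$ may be $3$ you only get $l<|D|$, hence $|b|\le 1$ from Lemma 6.1, and you must dispose of $b=\pm 1$ by hand (which does work: $la^2\pm 2a+q=4$ has no integer solutions when $l\ge 12$ and $q\ge 3$). The paper's device for avoiding exactly this is to swap the outer coefficients, i.e.\ replace $f(x,y)=lx^2+2nxy+qy^2$ by $F(x,y)=qx^2+2nxy+ly^2$, which represents $4$ iff $f$ does and whose leading coefficient $q$ satisfies $4q<|D|$, forcing $b=0$ outright; you could graft that trick onto your argument to shorten the fiddly case. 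On balance your version is more self-contained (no genus theory, no $2^m$ subcase) at the price of two auxiliary norm arguments for the prime $2$ and for ramified primes, which you correctly identify as lying outside Theorem 1.1 but which are routine for $|D|>16$.
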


\begin{remark}

\normalfont Let $D$ be a square-free integer other than 1 such that $D<0$ and $D\equiv 1 \pmod 4$. Let $K={\mathbb Q}(\sqrt D)$. By genus theory we know that if $|D|$ is not a prime, then $K$ has class number greater than 1 (see Corollary 2, p. 446 of [8], for instance). However, we do not make use of this fact until Case 2.2.1 of the proof of Proposition 6.4.

\end{remark}
 
\begin{proof} 

Since $4+|D|\equiv 3\pmod 4$ we have $4+|D|=lq$ where $l$ and $q$ are odd positive integers  and $q$ is prime. Since $D\not \equiv 0\pmod q$ and $2^2-D=lq$ we have $q{\mathcal O}_K=(q, 2+\sqrt D)(q, 2-\sqrt D)$.

\begin{case}
\normalfont Assume $q\equiv 1\pmod 4$. Then $l\equiv 3\pmod 4$, so $l$ is not a square. Suppose that $4l\geq |D|$. Then $4+|D|=lq\geq \frac{1}{4} |D|q\geq \frac{1}{4} |D|5$. Hence, $4\geq \frac{1}{4} |D|$ so $16\geq |D|$, which is a contradiction. Therefore, $4l<|D|$. Now let $f(x,y)=lx^2+2(2)xy+qy^2$ and suppose there are integers $a$ and $b$ such that $f(a,b)=4$. Then by Lemma 6.1, $b=0$. But then $la^2=4$ which contradicts the fact that $l$ is not a square. Hence $f$ cannot represent 4. So by Theorem 1.1 (i) the ideal $(q, 2+\sqrt D)$ is not principal, so $K$ has class number greater than 1. 
\end{case}

\begin{case}
\normalfont Assume $q\equiv 3\pmod 4$. 
\end{case}

\begin{subcase}
\normalfont Suppose $q<|D|$. Then $4+|D|=lq<l|D|$ so $4<(l-1)|D|$ which implies $l>1$. Moreover, since $l\equiv 1\pmod 4$ we have $l\geq 5$. We will now show that $f(x,y)=lx^2+2(2)xy+qy^2$ cannot represent 4. Let $F(x,y)=qx^2+2(2)xy+ly^2$.  Note that $f$ represents 4 if and only if $F$ represents 4. We will show that $F$ cannot represent 4. Suppose $4q\geq |D|$. Then $4+|D|=lq\geq l\cdot\frac{1}{4}|D|\geq \frac{5}{4}|D|$. Hence, $4\geq \frac{1}{4} |D|$ so $16\geq |D|$ which is a contradiction. Therefore, $4q<|D|$. Now suppose there are integers $a$ and $b$ such that $F(a,b)=4$. Then by Lemma 6.1, $b=0$. But then $qa^2=4$ which is a contradiction. Hence, $F$ cannot represent 4 and so $f$ cannot represent 4. So by Theorem 1.1 (i) the ideal $(q, 2+\sqrt D)$ is not principal, so $K$ has class number greater than 1.
\end{subcase}

\begin{subcase}
\normalfont Suppose $q\geq |D|$. Since $D\not \equiv 0\pmod q$ we have $q> |D|$. Then $4+|D|=lq>l|D|$ so $4>(l-1)|D|$. Since $|D|>16$ this implies $l=1$. So in this case $f(2,0)=4$ where $f(x,y)=x^2+2(2)xy+qy^2$. By Theorem 1.1 (i) the prime ideal $(q, 2+\sqrt D)$ is principal. So in this case we need to make a modification in the hopes of obtaining an ideal of ${\mathcal O}_K$ that is not principal. To begin with, note that $1+|D|=q-3$. There are two cases to consider.

\begin{subsubcase}\label{subsubcase}
 \normalfont Assume $q-3=2^m$. If $|D|$ is not a prime, then by genus theory $K$ has class number greater than 1. So assume $|D|$ is a prime. Since $|D|>16$ and $|D|=2^m-1$ we have $m\geq5$. Note that $9+|D|=2^m+8=2^3(2^{m-3}+1)$, $m\geq 5$. We now show that $2^{m-3}+1<|D|$. Suppose not. Then $2^{m-3}+1\geq 2^m-1$ so $2\geq 2^m-2^{m-3}$ which implies $2\geq 2^{m-3}(2^3-1)$, $m\geq5$, a contradiction. So $2^{m-3}+1<|D|$. Now let $p$ be an odd prime dividing $2^{m-3}+1$. Then $9+|D|=dp$ where $d\equiv 0\pmod 8$ and $p<|D|$. Note that if $D\equiv 0\pmod p$, then $p=3$, so $|D|=3$ since $|D|$ is a prime. But this contradicts the fact that $|D|>16$. Hence,  $D\not \equiv 0\pmod p$. Since $3^2-D=dp$ we have $p{\mathcal O}_K=(p, 3+\sqrt D)(p, 3-\sqrt D)$. Suppose that $4p\geq |D|$. Then $9+|D|=dp\geq d\cdot\frac{1}{4}|D|\geq 8\cdot \frac{1}{4}|D|=2|D|$ which implies $9\geq |D|$, a contradiction. So $4p<|D|$. Now consider $f(x,y)=dx^2+2(3)xy+py^2$ and $F(x,y)=px^2+2(3)xy+dy^2$.
If there are integers $a$ and $b$ such that $F(a,b)=4$, then by Lemma 6.1, $b=0$. Then $pa^2=4$, which is a contradiction. Hence, $F$ cannot represent 4. Since $f$ represents 4 if and only if $F$ represents 4, we have that $f$ does not represent 4. So by Theorem 1.1 (i) the ideal $(p, 3+\sqrt D)$ is not principal, so $K$ has class number greater than 1.

\begin{subsubcase}
\normalfont Assume $q-3\not=2^m$. Then $1+|D|=dp$ where $d$ and $p$ are positive integers with $p$ an odd prime and $d\equiv 0\pmod 4$.  Note that $D\not \equiv 0\pmod p$. Since $1^2-D=dp$ we have $p{\mathcal O}_K=(p, 1+\sqrt D)(p, 1-\sqrt D)$. Assume $d\ne 4$. Then $d\geq 8$. Hence, if $4p\geq |D|$, then $1+|D| =dp\geq d\cdot \frac{1}{4}|D|\geq 8\cdot \frac{1}{4}|D|=2|D|$ which implies $1\geq |D|$, a contradiction. So $4p<|D|$. Now consider $f(x,y)=dx^2+2(1)xy+py^2$ and $F(x,y)=px^2+2(1)xy+dy^2$.
If there are integers $a$ and $b$ such that $F(a,b)=4$, then by Lemma 6.1, $b=0$. Then $pa^2=4$, which is a contradiction. Hence, $F$ cannot represent 4. Since $f$ represents 4 if and only if $F$ represents 4, we have that $f$ does not represent 4. So by Theorem 1.1 (i) the ideal $(p, 1+\sqrt D)$ is not principal, so $K$ has class number greater than 1.
\end{subsubcase}
\end{subsubcase}
\end{subcase}

Thus, ${\mathbb Q}(\sqrt D)$ has class number 1 only if  $d=4$, so $1+|D|=q-3=4p$. Hence, $|D|=4p-1$, where $p$ is an odd prime,  $4p-1$ is a prime, and $4p+3$ is prime. Since $|D|>16$ we have $p\geq 5$. Moreover, if $p>5$, then $p\equiv 1, 3, 7 \ {\text or}\  9 \pmod {10}$. If $p\equiv 3 \pmod {10}$, then $4p+3\equiv 5 \pmod {10}$, a contradiction since  $4p+3$ is a prime greater than 5. If $p\equiv 9 \pmod {10}$, then $4p-1\equiv 5 \pmod {10}$, a contradiction since  $4p-1$ is a prime greater than 5. Hence $p\equiv 1 \ \text {or}\  7 \pmod {10}$. 

\end{proof}

\begin{remark}

\normalfont With the notation as in the last paragraph of the proof of Proposition 6.4, note that $D\not \equiv 0\pmod p$. Since $1^2-D=4p$ we have  $p{\mathcal O}_K=(p, 1+\sqrt D)(p, 1-\sqrt D)$. Since $f(1,0)=4$ where $f(x,y)=4x^2+2(1)xy+py^2$, the ideal $(p,1+\sqrt D)$ is principal by Theorem 1.1 (i).  

\end{remark}

\begin{corollary}

The primes $p$ such that $5\leq p\leq 41$ and ${\mathbb Q}(\sqrt{1-4p})$ has  class number 1 are the primes 5, 11, 17, and 41 corresponding, respectively, to the fields  ${\mathbb Q}(\sqrt{-19})$, ${\mathbb Q}(\sqrt{-43})$, ${\mathbb Q}(\sqrt{-67})$, and ${\mathbb Q}(\sqrt{-163})$. If $p$ is a prime such that $41<p\leq 619$ and $p\not \in \{ 227, 521, 587\}$, then  ${\mathbb Q}(\sqrt{1-4p})$ has  class number greater than 1.

\end{corollary}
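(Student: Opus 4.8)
The plan is to establish the corollary's two assertions separately, in each case using Proposition 6.4 to cut the list of candidate primes down to a short explicit one and then settling the survivors individually. Throughout, for a prime $p$ write $D=1-4p$; I read the statement with the tacit hypothesis that $D$ is square-free, so that $|D|=4p-1$ is the integer attached to $K=\mathbb{Q}(\sqrt{1-4p})$ by the paper's conventions. This costs nothing for the primes that actually survive the reduction below, since $D=-(4p-1)$ is automatically square-free once $4p-1$ is prime.

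For the first assertion, observe that $|D|=4p-1\ge 19>16$ for every prime $p$ with $5\le p\le 41$, so Proposition 6.4 applies and tells us that $K$ has class number $1$ only if $4p-1$ and $4p+3$ are both prime and, when $p>5$, $p\equiv 1$ or $7\pmod{10}$. I would run through $p=5,7,11,13,17,19,23,29,31,37,41$ and check these conditions: exactly $p=5,11,17,41$ survive, giving $\mathbb{Q}(\sqrt{-19})$, $\mathbb{Q}(\sqrt{-43})$, $\mathbb{Q}(\sqrt{-67})$, $\mathbb{Q}(\sqrt{-163})$. For each of these four fields one then verifies, via Minkowski's bound exactly as in the proof of Corollary 6.3, that every prime ideal of norm at most the bound is principal, so each has class number $1$; this gives the first assertion.

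For the second assertion I would apply Proposition 6.4 once more: for a prime $p$ with $41<p\le 619$, if $4p-1$ is composite, or $4p+3$ is composite, or $p\not\equiv 1,7\pmod{10}$, then $K$ has class number greater than $1$. A direct computation --- testing primality of $4p-1$ and $4p+3$ and computing $p\bmod 10$ for every prime $p$ in the range --- shows that these tests dispose of all of them except $227$, $467$, $521$, $587$. For $p=467$ (so $D=-1867$) I would argue as in Example 5.4: the odd prime $q=7$ satisfies $7\nmid D$ and $D\equiv 3^2\pmod 7$ with $3^2-D=7\cdot 268$, so the binary quadratic form attached to $(7,3+\sqrt{-1867})$ is $f(x,y)=268x^2+6xy+7y^2$; since $4\cdot 7<|D|$, applying Lemma 6.1 with $M=4$ to the form $F(x,y)=7x^2+6xy+268y^2$, which represents the same integers as $f$, forces any integer point with $F(r,s)\le 4$ to have $s=0$, whence $7r^2=4$, which is impossible. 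Hence $f$ does not represent $\delta^2=4$, and by Theorem 1.1 (i) the prime ideal $(7,3+\sqrt{-1867})$ is not principal, so $K$ has class number greater than $1$. The same scheme, with a suitably chosen auxiliary split prime $q$ satisfying $4q<|D|$, also applies to $227$, $521$, $587$; these three are singled out in the statement because they lie outside the range covered by the particular formulation of Theorem 6.9, not because $K$ has class number $1$ there.

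I expect the main obstacle to be the computational heart of the second assertion: one must verify primality of $4p-1$ and $4p+3$ and compute $p\bmod 10$ for every prime $p$ with $41<p\le 619$ in order to arrive at the list of survivors, and then, for each survivor, exhibit a convenient split prime $q$ with $4q<|D|$ and run the Lemma 6.1 argument (equivalently, the proper-equivalence argument of Examples 5.3 and 5.4). Everything else --- the two reductions via Proposition 6.4 and the Minkowski-bound verification of the four class-number-one fields --- is routine.
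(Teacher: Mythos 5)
Your proposal follows the same two-step strategy as the paper's proof: use the necessary conditions of Proposition 6.4 to whittle each range of primes down to a short explicit list, then settle the survivors (Minkowski's bound for the class-number-one fields in the first range; nothing further is needed in the second range because the survivors there are excluded by hypothesis). For $5\le p\le 41$ your computation agrees with the paper's: the survivors are $5,11,17,41$. The substantive difference is in the range $41<p\le 619$. The paper asserts that among the $50$ primes $p$ in this range with $p\equiv 1$ or $7 \pmod{10}$, only $227$, $521$, and $587$ have both $4p-1$ and $4p+3$ prime; you find a fourth survivor, $p=467$. You are right: $467\equiv 7\pmod{10}$ and both $4\cdot 467-1=1867$ and $4\cdot 467+3=1871$ are prime, so Proposition 6.4 does not dispose of $p=467$, and the paper's proof of the second assertion is incomplete at exactly this point. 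Your supplementary argument for $p=467$ is correct and is genuinely needed: $7$ splits in $\mathbb{Q}(\sqrt{-1867})$ since $-1867\equiv 2\equiv 3^2\pmod 7$ with $3^2+1867=7\cdot 268$, the associated form is $268x^2+6xy+7y^2$, and Lemma 6.1 applied to $7x^2+6xy+268y^2$ with $M=4$ gives $s^2\le 28/1867<1$, hence $s=0$ and $7r^2=4$, which is impossible; so the form does not represent $\delta^2=4$ and $(7,3+\sqrt{-1867})$ is not principal by Theorem 1.1 (i). (Equivalently one could fold $467$ into the paper's later treatment of $227$, $521$, $587$ by noting $\left(\frac{7}{1867}\right)=1$ and invoking Lemma 6.10, or observe that $F_{-1867}(2)=469=7\cdot 67$ and invoke Theorem 6.9.) In short, your proof is more complete than the paper's on this point.

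Two smaller remarks. First, your parenthetical explanation of why $227$, $521$, $587$ are excluded is off: Theorem 6.9 carries no range restriction; those three are excluded because Proposition 6.4 fails to eliminate them, and the paper defers them to Lemma 6.10 and Example 6.11. This does not affect your proof, since the corollary excludes them by hypothesis. Second, your tacit square-freeness assumption on $1-4p$ is not cosmetic: for example $p=43$ gives $1-4p=-171$ and $\mathbb{Q}(\sqrt{-171})=\mathbb{Q}(\sqrt{-19})$, which has class number $1$, so the second assertion is only true under that reading; flagging it explicitly, as you do, is the right call.
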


\begin{proof}

The prime 5 satisfies all of the necessary conditions stated in Proposition 6.4 for the field ${\mathbb Q}(\sqrt{-19})$ to have class number 1. Using Minkowski's bound we find that this field does indeed have class number 1. Among the 6 primes $p$ such that  $5< p\leq 41$ and $p\equiv 1 \ \text {or}\  7 \pmod {10}$, only $p=11$, $p=17$, and $p=41$ satisfy the condition that $4p-1$ and $4p+3$ are also prime. Using Minkowski's bound we find that the 3  fields ${\mathbb Q}(\sqrt{-43}), {\mathbb Q}(\sqrt{-67})$, and ${\mathbb Q}(\sqrt{-163})$  corresponding, respectively, to these values of $p$, have class number 1. Among the 50 primes $p$ such that $41<p\leq 619$ and $p\equiv 1 \ \text {or}\  7 \pmod {10}$, only  $p=227$, $p=521$, and $p=587$ satisfy the condition that $4p-1$ and $4p+3$ are also prime. Hence, by  Proposition 6.4, the fields  ${\mathbb Q}(\sqrt{1-4p})$ where $p$ is a prime such that $41<p\leq 619$ and $p\not \in \{ 227, 521, 587\}$, have class number greater than 1.

\end{proof} 

\begin{lemma}

If $D$ is a square-free integer such that $D<0$, $D\equiv 1\pmod 4$, and $3\leq |D|\leq 15$, then  ${\mathbb Q}(\sqrt{D})$ has class number 1 if and only if $D\in \{-3,-7,-11\}$. 

\end{lemma}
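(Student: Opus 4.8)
The plan is to first pin down which fields are in question. Because $D\equiv 1\pmod 4$ forces $|D|\equiv 3\pmod 4$, the square-free integers $D$ with $D<0$, $D\equiv 1\pmod 4$, and $3\le |D|\le 15$ are precisely $D\in\{-3,-7,-11,-15\}$. So I must show that ${\mathbb Q}(\sqrt{-3})$, ${\mathbb Q}(\sqrt{-7})$, and ${\mathbb Q}(\sqrt{-11})$ have class number $1$, and that ${\mathbb Q}(\sqrt{-15})$ has class number greater than $1$; since this exhausts the finite list, the two implications of the biconditional follow simultaneously.

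For the first three fields I would invoke Minkowski's bound (Corollary 2, p.~136 of [7]), exactly as in the proofs of Corollaries 6.3 and 6.7. Since $D\equiv 1\pmod 4$ the discriminant equals $D$, so the Minkowski bound is $\tfrac{2}{\pi}\sqrt{|D|}$. For $|D|=3$ and $|D|=7$ this is less than $2$, so every ideal class contains an integral ideal of norm $1$, i.e.\ ${\mathcal O}_K$ itself, whence the class number is $1$. For $|D|=11$ the bound lies strictly between $2$ and $3$, so it remains only to exclude prime ideals of norm $2$; but $-11\equiv 5\pmod 8$, so $2$ is inert in ${\mathbb Q}(\sqrt{-11})$ (equivalently, the minimal polynomial $x^2-x+3$ of $(1+\sqrt{-11})/2$ stays irreducible modulo $2$), hence there is no ideal of norm $2$ and the class number is again $1$.

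For $D=-15$ I would use Theorem 1.1, so as to stay within the paper's framework. Here $\delta=2$. Since $6^2=36\equiv 2\equiv -15\pmod{17}$, the prime $17$ splits: $17{\mathcal O}_K=(17,6+\sqrt{-15})(17,6-\sqrt{-15})$, with $n=6$ and $n^2-D=51=3\cdot 17$, so $l=3$ and $f(x,y)=3x^2+12xy+17y^2$, a positive definite form of determinant $-15$. By Theorem 1.1 (i) the ideal $(17,6+\sqrt{-15})$ is principal if and only if $f$ represents $\delta^2=4$. Applying Lemma 6.1 with $a=3$, $d=36-51=-15$, and $M=4$, any integer solution of $f(r,s)=4$ satisfies $s^2\le 12/15<1$, so $s=0$, which forces $3r^2=4$, impossible. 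Hence $f$ does not represent $4$, the ideal $(17,6+\sqrt{-15})$ is not principal, and ${\mathbb Q}(\sqrt{-15})$ has class number greater than $1$. (Alternatively, since $15$ is not prime, this follows at once from the genus-theory fact quoted in Remark 6.5.)

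I do not anticipate a genuine obstacle; the argument is a short finite check. The only points needing a little care are the case $|D|=11$, where the Minkowski bound exceeds $2$ and one must separately observe that $2$ is inert, and the choice of the auxiliary split prime for $D=-15$: taking $q=17$ keeps the leading coefficient of $f$ small enough that Lemma 6.1 immediately pins $s=0$, whereas an unlucky choice such as $q=19$ (where $n=2$, $l=1$, and $f(2,0)=4$) produces a principal ideal and proves nothing.
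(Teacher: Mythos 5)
Your proposal is correct, and for the three fields of class number $1$ it follows the paper exactly: list the four candidates $-3,-7,-11,-15$ and dispose of the first three with Minkowski's bound (your extra care for $|D|=11$, where the bound exceeds $2$ and one must check that $2$ is inert, is a detail the paper leaves to the reader). The only divergence is the case $D=-15$: the paper simply cites genus theory (Remark 6.5: $|D|$ not prime forces class number greater than $1$), whereas your primary argument stays inside the paper's own machinery, using the split prime $17$ with $n=6$, $l=3$, $f(x,y)=3x^2+12xy+17y^2$, and Lemma 6.1 to show $f$ cannot represent $\delta^2=4$, so $(17,6+\sqrt{-15})$ is not principal by Theorem 1.1(i). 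Your computation checks out ($s^2\le 12/15$ forces $s=0$, and $3r^2=4$ is impossible), and your remark about the unlucky choice $q=19$ is apt --- it is exactly the phenomenon the paper confronts in Case 2.2 of Proposition 6.4. The Theorem 1.1 route is more self-contained and in the spirit of the paper's program; the genus-theory citation, which you also note as an alternative, is shorter. Either is acceptable.
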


\begin{proof}

The square-free integers $D$ such that $D<0$, $D\equiv 1\pmod 4$, and $3\leq |D|\leq 15$ are $-3, -7, -11$, and $-15$. Using Minkowski's bound we find that the fields ${\mathbb Q}(\sqrt{D})$ with $D\in \{-3,-7,-11\}$ have class number 1. Since 15 is not a prime, ${\mathbb Q}(\sqrt{-15})$ has class number greater than 1 by genus theory.

\end{proof}

In view of Corollary 6.3, Corollary 6.7, and Lemma 6.8, to complete the proof of Theorem 1.3 it suffices to show that if $p \in \{ 227, 521, 587\}$, or $p>619$, then  ${\mathbb Q}(\sqrt{1-4p})$ has class number greater than 1. The following result proved in [3] and [10] is stated as Theorem 8.28 in [8] which we restate here as Theorem 6.9. For a number field $K$ let $h(K)$ be its class number.

\begin{theorem}[Frobenius--Rabinowitsch] Let $K$ be an imaginary quadratic field with discriminant $d\ne -3,-4,-8$. Then $h(K)=1$ holds if and only if $d\equiv 1\pmod 4$, and for $x=1,2,\ldots, (1-d)/4-1$ the polynomial 

\begin{longtable}{l}
$\displaystyle F_d(X)=X^2-X+\frac{1-d}{4}$
\end{longtable}

\noindent attains exclusively prime values.

\end{theorem}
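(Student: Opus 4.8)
The plan is to deduce the theorem from material already assembled: Theorem 1.1, Lemma 6.1, Corollary 6.3, genus theory (Remark 6.5), and Minkowski's bound. When $d\equiv 1\pmod 4$ write $D=d$, so that $K={\mathbb Q}(\sqrt D)$ with $D$ square-free and $D<0$, and put $n=(1-d)/4=(1+|D|)/4$. The identity driving everything is $4F_d(x)=(2x-1)^2-d$: for an odd prime $q$ with $q\nmid d$ one has $q\mid F_d(x_0)$ if and only if $(2x_0-1)^2\equiv d\pmod q$, which is precisely the condition that $q$ split in $K$ with $\mathcal P=(q,(2x_0-1)+\sqrt D)$ a prime factor of $q{\mathcal O}_K$; for that $\mathcal P$ one gets $n^2-D=lq$ with $l=4F_d(x_0)/q$. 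The very first step is that $h(K)=1$ forces $d\equiv 1\pmod 4$: if $D\equiv 2$ or $3\pmod 4$ then $d=4D\not\equiv 1\pmod 4$, and Corollary 6.3 says such a field has class number $1$ only for $D=-1,-2$, i.e. $d=-4,-8$, both excluded. So the substance is the equivalence when $d\equiv 1\pmod 4$, and in both directions it reduces to matching a prime $q<n$ dividing a value $F_d(x_0)$ with a non-principal prime ideal of norm $q$.

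For the implication $h(K)=1\Longrightarrow F_d$ takes only prime values on $\{1,\ldots,n-1\}$: suppose some $F_d(x_0)$ with $1\le x_0\le n-1$ is composite, and let $q$ be its least prime divisor, so $q^2\le F_d(x_0)$. Since $F_d$ increases on $[1,\infty)$, $q^2\le F_d(n-1)=n^2-2n+2<n^2$, hence $q\le n-1$. If $q$ is odd and $q\mid d$, then $q$ is a proper prime divisor of $|D|$ (as $q<n<|D|$), so $|D|$ is composite and $h(K)>1$ by genus theory (Remark 6.5), a contradiction. If $q$ is odd and $q\nmid d$, then $q$ splits and $\mathcal P=(q,(2x_0-1)+\sqrt D)$ has $l=4F_d(x_0)/q>0$; by Theorem 1.1(i), $\mathcal P$ is principal if and only if $f(x,y)=lx^2+2(2x_0-1)xy+qy^2$ represents $\delta^2=4$, equivalently (interchanging the outer coefficients, as in Example 6.4 and Proposition 6.4) if and only if $F(x,y)=qx^2+2(2x_0-1)xy+ly^2$ represents $4$. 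But $F$ has positive leading coefficient $q$ and determinant $(2x_0-1)^2-lq=d<0$, and since $4q\le 4(n-1)=|D|-3<|D|$, Lemma 6.1 with $M=4$ forces $|s|\le\sqrt{4q/|D|}<1$ for any representation $F(r,s)=4$; hence $s=0$ and $qr^2=4$, impossible because $q\ge 3$. So $\mathcal P$ is not principal, a contradiction. The remaining possibility $q=2$ is treated by hand: then $F_d(x_0)$ even forces $n$ even, so $D\equiv 1\pmod 8$ and $2$ splits, while an element of norm $2$ would satisfy $u^2+|D|v^2=8$ with $u\equiv v\pmod 2$, impossible once $|D|\ge 15$ ($|D|=7$ cannot occur here, since then $\{1,\ldots,n-1\}=\{1\}$ and $F_{-7}(1)=2$ is prime); so a non-principal prime of norm $2$ exists, a contradiction.

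For the converse, assume $d\equiv 1\pmod 4$ and that $F_d$ takes only prime values on $\{1,\ldots,n-1\}$; I would first show that no rational prime $q<n$ splits or ramifies in $K$. If $q<n$ ramifies, then $q\mid d$ and $x_0=(q+1)/2$ lies in $\{1,\ldots,n-1\}$ with $q\mid 4F_d(x_0)=q^2-d$, hence $q\mid F_d(x_0)$; since $F_d(x_0)\ge F_d(1)=n>q$ this value is composite, contradicting the hypothesis. If $q<n$ splits, choose an odd $m$ with $0<m<q$ and $m^2\equiv d\pmod q$; then $x_0=(m+1)/2\in\{1,\ldots,n-1\}$ gives $q\mid 4F_d(x_0)=m^2-d$, so $q\mid F_d(x_0)\ge n>q$ is again composite, a contradiction. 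Hence every rational prime below $n$ is inert. Because the Minkowski bound $\frac{2}{\pi}\sqrt{|D|}$ is $<n$ for all the $D$ in question (the inequality $\frac{8}{\pi}\sqrt{|D|}<1+|D|$ holds for $|D|\ge 7$), every ideal class of ${\mathcal O}_K$ contains an integral ideal of norm $<n$; such an ideal factors into prime ideals lying over rational primes $<n$, all inert, hence into principal ideals $(q)$, so it is principal. Therefore $h(K)=1$.

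The heart of the argument copies the examples of Section 5 and Proposition 6.4 almost word for word: a composite value $F_d(x_0)$ yields a split (or ramified) prime $q<n$, and Theorem 1.1(i) together with Lemma 6.1 shows the associated prime ideal cannot be principal. The only real friction I anticipate is boundary bookkeeping: checking that a square root $x_0$ of the relevant congruence can always be taken inside the prescribed range $\{1,\ldots,n-1\}$ (using $F_d(x)=F_d(1-x)$ and the monotonicity of $F_d$), verifying the numerical gap between the Minkowski bound and $n$, and disposing of the two cases lying outside Theorem 1.1 — the ramified prime $q\mid d$ (handled by genus theory) and the prime $q=2$ (handled by the norm form $u^2+|D|v^2$).
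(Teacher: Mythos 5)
The paper does not actually prove Theorem 6.9 at all: it is imported verbatim from Frobenius [3] and Rabinowitsch [10] via Theorem 8.28 of [8], and is the one non-elementary-looking black box in the paper's ``elementary'' proof of Theorem 1.3. Your proposal therefore cannot match the paper's proof --- there is none --- but it is essentially correct, and it is a genuinely worthwhile addition, because it derives the Rabinowitsch criterion from the paper's own toolkit (Theorem 1.1, Lemma 6.1, Corollary 6.3, the genus-theory fact of Remark 6.5, and Minkowski's bound), which would make Section 6 self-contained. The two main ideas check out: in the forward direction, a composite value $F_d(x_0)$ has least prime divisor $q\le n-1$ with $n=(1-d)/4$, the identity $4F_d(x_0)=(2x_0-1)^2-d$ produces a split prime with $l=4F_d(x_0)/q$, and the inequality $4q\le 4(n-1)=|D|-3<|D|$ is exactly what Lemma 6.1 needs to kill any representation of $4$ by the flipped form $qx^2+2(2x_0-1)xy+ly^2$ --- this is precisely the mechanism of Proposition 6.4 and Example 5.4. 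In the converse direction, the reduction to ``no prime below $n$ splits or ramifies'' plus the verification $\frac{2}{\pi}\sqrt{|D|}<n$ for $|D|\ge 7$ is sound. What each approach buys: the paper's citation keeps Section 6 short but leaves the reader dependent on an external analytic-flavored source; your route costs about a page but closes the loop and exhibits Theorem 1.1 as strong enough to prove Frobenius--Rabinowitsch itself.

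Two loose ends you should tighten before this could be spliced in. First, the prime $2$: in the forward direction your norm-form argument $u^2+|D|v^2=8$ is fine, but in the converse direction the step ``$q\mid 4F_d(x_0)$ hence $q\mid F_d(x_0)$'' uses that $q$ is odd, so the split prime $2$ must be handled separately (it is, via $F_d(1)=n$ being even and greater than $2$ exactly when $2$ splits and $|D|>7$, but say so). Second, you overload the letter $n$ --- it is simultaneously $(1-d)/4$ and the square root of $D$ modulo $q$ in the sense of Theorem 1.1 ($n=2x_0-1$ there); in a written-out version these must be given distinct names, since both appear in the same displayed form $lx^2+2nxy+qy^2$.
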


For integers $M$ and $P$ where $P$ is an odd prime, let $\left(\frac {M}{P}\right)$ denote the Legendre symbol. 

\begin{lemma}
Let $p$ be an odd prime such that $p\geq 5$ and $4p-1$ is prime, and let $q$ be an odd prime such that $q <p$ and  $(\frac{q}{4p-1})=1$. Then the field ${\mathbb Q}(\sqrt{1-4p})$ has class number greater than 1.

\end{lemma}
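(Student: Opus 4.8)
The plan is to extract from the hypotheses an odd prime that splits in $K={\mathbb Q}(\sqrt D)$ with $D=1-4p$, and then apply Theorem 1.1 (i) to one of the primes above it, showing that the associated binary quadratic form cannot represent $\delta^2$. First I would record the elementary facts about $D$. Since $4p-1$ is prime it is square-free, so $D=-(4p-1)$ is a square-free integer with $|D|=4p-1\geq 19$; since $D\equiv 1\pmod 4$ we have $\delta=2$, hence $\delta^2=4$; and since $q<p<4p-1$ with $4p-1$ prime, $q$ does not divide $4p-1=|D|$, so $q\nmid D$ and the Legendre symbol $\left(\frac{q}{4p-1}\right)$ is defined. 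This is exactly the setting in which Theorem 1.1 (i) applies.

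Next I would convert the hypothesis $\left(\frac{q}{4p-1}\right)=1$ into a statement about $D$ modulo $q$ by quadratic reciprocity. Writing $P=4p-1$, which is prime with $P\equiv 3\pmod 4$, reciprocity gives $\left(\frac{q}{P}\right)\left(\frac{P}{q}\right)=(-1)^{\frac{q-1}{2}\cdot\frac{P-1}{2}}=(-1)^{\frac{q-1}{2}(2p-1)}=(-1)^{\frac{q-1}{2}}$, since $2p-1$ is odd; hence $\left(\frac{P}{q}\right)=(-1)^{\frac{q-1}{2}}$. As $D=-P$, it follows that $\left(\frac{D}{q}\right)=\left(\frac{-1}{q}\right)\left(\frac{P}{q}\right)=(-1)^{\frac{q-1}{2}}(-1)^{\frac{q-1}{2}}=1$, so $D$ is a nonzero quadratic residue modulo $q$. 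Thus there is an integer $n$ with $D\equiv n^2\pmod q$, and together with $q\nmid D$ this gives $q{\mathcal O}_K=(q,n+\sqrt D)(q,n-\sqrt D)$. Writing $n^2-D=lq$, we have $lq=n^2+|D|>0$ since $D<0$, so $l>0$.

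Finally I would show that $f(x,y)=lx^2+2nxy+qy^2$ does not represent $4$. As in the proof of Proposition 6.4, the trick is to pass to the reversed form $F(x,y)=qx^2+2nxy+ly^2=f(y,x)$, which represents $4$ if and only if $f$ does. For $F$ the leading coefficient is $q>0$ and $b^2-ac=n^2-ql=D<0$, so Lemma 6.1 applies with $M=4$: any integer solution of $F(r,s)=4$ satisfies $|s|\leq\sqrt{4q/|D|}$. But $q<p$ forces $4q\leq 4p-4<4p-1=|D|$, so $\sqrt{4q/|D|}<1$ and hence $s=0$; then $qr^2=4$, which is impossible since $q$ is an odd prime. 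Therefore $F$, and so $f$, does not represent $4=\delta^2$, so by Theorem 1.1 (i) the prime ideal $(q,n+\sqrt D)$ is not principal, whence $K$ has class number greater than $1$.

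The step requiring the most care is the reciprocity bookkeeping in the second paragraph — correctly handling the parity of $\frac{P-1}{2}=2p-1$ and the contribution of $\left(\frac{-1}{q}\right)$ — together with the observation that the hypothesis ``$4p-1$ prime'' is precisely what simultaneously ensures $q\nmid D$ and that $D$ is square-free, so that Theorem 1.1 is legitimately applicable. The size estimate itself is immediate once one notices that the bound $q<p$ is tailored exactly to make $4q<|D|$.
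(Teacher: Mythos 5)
Your proof is correct, but it takes a genuinely different route from the paper. The paper deduces the lemma from the Frobenius--Rabinowitsch criterion (Theorem 6.9): from $\left(\frac{q}{4p-1}\right)=\left(\frac{1-4p}{q}\right)=1$ it extracts a root $n$ of $x^2-x+p\equiv 0\pmod q$ with $1\leq n\leq q-1<p$, so that $F_d(n)=n^2-n+p$ is a multiple of $q$ strictly larger than $q$, hence composite, and the criterion then forces $h(K)>1$. You instead stay entirely inside the paper's own machinery: the same reciprocity computation shows $D=1-4p$ is a quadratic residue mod $q$, so $q$ splits, and you then apply Theorem 1.1 (i) together with Lemma 6.1 via the reversed form $F(x,y)=qx^2+2nxy+ly^2$, where the hypothesis $q<p$ is exactly what gives $4q<|D|$ and hence $s=0$, $qr^2=4$, a contradiction. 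All the details check out (in particular $|D|=4p-1$ prime guarantees $D$ is square-free and $q\nmid D$, and $\frac{P-1}{2}=2p-1$ odd makes the reciprocity sign cancel against $\left(\frac{-1}{q}\right)$). The two arguments are really dual forms of the same fact --- a split prime $q$ with $4q<|D|$ yields a non-principal prime ideal, which is precisely what a composite value of $F_d$ detects --- but your version has the advantage of being self-contained: it removes the only use of Theorem 6.9, which the paper imports from the literature without proof, and it matches the style of the earlier cases in the proof of Proposition 6.4. The paper's version is shorter once Theorem 6.9 is granted.
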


\begin{proof}

Let $p$ be an odd prime such that $p\geq 5$ and $4p-1$ is prime, and let $q$ be an odd prime such that $q <p$ and $(\frac{q}{4p-1})=1$.  We have either $q\equiv 1\pmod 4$ or $q\equiv 3\pmod 4$. In each case, using the standard properties of the Legendre symbol and the law of quadratic reciprocity, we obtain $(\frac{q}{4p-1})=(\frac{1-4p} {q})$. Therefore, the quadratic formula gives an $n\in \{1, 2, 3, \ldots, q-1\}$ that is a solution to the congruence $x^2-x+p\equiv 0\pmod q$. So $n^2-n+p\equiv 0\pmod q$. Since $n^2-n+p=n(n-1)+p\geq p>q$, it follows that $n^2-n+p$ is not prime. Hence, by Theorem 6.9, the field ${\mathbb Q}(\sqrt{1-4p})$ has class number greater than 1. 

\end{proof}

\begin{example}

\normalfont Recall that if $p$ is a prime such that $p\in \{227, 521, 587\}$, then $4p-1$ is prime. If $p=227$, then $4p-1=907$ and we have $(\frac{13}{907})=1$. If $p=521$, then $4p-1=2083$ and we have $(\frac{13}{2083})=1$. If $p=587$, then $4p-1=2347$ and we have $(\frac{17}{2347})=1$. Hence, by Lemma 6.10 the fields ${\mathbb Q}(\sqrt{-907})$, ${\mathbb Q}(\sqrt{-2083})$,  and ${\mathbb Q}(\sqrt{-2347})$  have class number greater than 1.

\end{example}

In view of Example 6.11, to complete the proof of Theorem 1.3 it now suffices to show that if $p>619$, then  ${\mathbb Q}(\sqrt{1-4p})$ has class number greater than 1. If $M$ and $P$ are integers and $P$ is a prime such that $P$ does not divide  $M$, let $\frac {1}{M}$ represent the multiplicative inverse of $M$ modulo $P$. 

\begin{lemma}

Let $p$ be an odd prime such that $4p-1$ is a prime, and let $n$ be an integer. Then

\begin{longtable}{l}
$\displaystyle \left(\frac{p-n} {4p-1}\right)=-\left(\frac{4n-1}{4p-1}\right).$
\end{longtable}

\end{lemma}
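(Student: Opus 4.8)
The plan is to compute the Legendre symbol $\left(\frac{p-n}{4p-1}\right)$ by first multiplying the numerator by a convenient square (which does not change the symbol) so as to replace $p-n$ by something whose relationship to $4p-1$ is transparent. Set $P = 4p-1$; note $P$ is an odd prime by hypothesis. Since $4$ is a nonzero square modulo $P$, we have $\left(\frac{p-n}{P}\right) = \left(\frac{4(p-n)}{P}\right) = \left(\frac{4p-4n}{P}\right)$. Now reduce $4p - 4n$ modulo $P = 4p-1$: we get $4p - 4n \equiv (4p - 1) + 1 - 4n = 1 - 4n \pmod{P}$, hence $\left(\frac{4p-4n}{P}\right) = \left(\frac{1-4n}{P}\right) = \left(\frac{-(4n-1)}{P}\right)$.

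Next I would split off the factor $-1$ using $\left(\frac{-1}{P}\right) = (-1)^{(P-1)/2}$. Here $P = 4p - 1$, so $P - 1 = 4p - 2 = 2(2p-1)$ and $(P-1)/2 = 2p - 1$, which is odd; therefore $\left(\frac{-1}{P}\right) = -1$. Combining, $\left(\frac{p-n}{P}\right) = \left(\frac{-1}{P}\right)\left(\frac{4n-1}{P}\right) = -\left(\frac{4n-1}{P}\right)$, which is exactly the claimed identity once we substitute back $P = 4p-1$. One should note the implicit hypothesis that $P \nmid (p-n)$ and $P \nmid (4n-1)$ for the symbols to be the Legendre symbols of units; but these two divisibility conditions are equivalent (since $4(p-n) \equiv -(4n-1) \pmod P$), so the identity holds in the remaining (zero) case as well, both sides being $0$, and no separate argument is needed.

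There is no real obstacle here — the argument is a short manipulation with the standard multiplicativity of the Legendre symbol and the formula for $\left(\frac{-1}{P}\right)$. The only point requiring the slightest care is getting the sign right: it is essential that $(4p-1-1)/2 = 2p-1$ is odd (equivalently, that $4p - 1 \equiv 3 \pmod 4$), which is what produces the minus sign in the statement. I would present the computation as a short displayed chain of equalities of Legendre symbols, justifying each step (multiply by the square $4$; reduce mod $4p-1$; factor out $-1$; evaluate $\left(\frac{-1}{4p-1}\right) = -1$), and conclude.

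\begin{proof}
Write $P=4p-1$, an odd prime. Since $4=2^2$ is a nonzero square modulo $P$,
\begin{longtable}{l}
$\displaystyle \left(\frac{p-n}{P}\right)=\left(\frac{4(p-n)}{P}\right)=\left(\frac{4p-4n}{P}\right).$
\end{longtable}
\noindent Reducing the numerator modulo $P=4p-1$ gives $4p-4n\equiv (4p-1)-4n+1=1-4n\pmod P$, so
\begin{longtable}{l}
$\displaystyle \left(\frac{4p-4n}{P}\right)=\left(\frac{1-4n}{P}\right)=\left(\frac{-1}{P}\right)\left(\frac{4n-1}{P}\right).$
\end{longtable}
\noindent Since $P=4p-1$, we have $(P-1)/2=2p-1$, which is odd, so $\left(\frac{-1}{P}\right)=(-1)^{(P-1)/2}=-1$. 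Therefore
\begin{longtable}{l}
$\displaystyle \left(\frac{p-n}{4p-1}\right)=-\left(\frac{4n-1}{4p-1}\right),$
\end{longtable}
\noindent as claimed. (In the case $4p-1\mid p-n$, both sides are $0$ since then $4p-1\mid 4n-1$ as well, and the identity still holds.)
\end{proof}
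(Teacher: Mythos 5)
Your proof is correct and is essentially the same computation as the paper's: the paper writes $p-n\equiv \frac14-n=-\frac{4n-1}{4}\pmod{4p-1}$ using the inverse of $4$, whereas you multiply by the square $4$ first, but both reduce to pulling out $\left(\frac{-1}{4p-1}\right)=-1$ (valid since $4p-1\equiv 3\pmod 4$) and discarding the square factor $4$. Your explicit handling of the degenerate case $4p-1\mid p-n$ is a small bonus the paper omits.
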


\begin{proof}

By the standard properties of the Legendre symbol we have 

\begin{longtable}{ll}
$\displaystyle\left(\frac{p-n} {4p-1}\right)$&$\displaystyle=\left(\frac{\frac {1}{4}(4p-1)+\frac {1}{4}-n} {4p-1}\right)
=\left(\frac{\frac {1}{4}-n}{4p-1}\right)$  \\ \\
&$\displaystyle =-\left(\frac{\frac{4n-1}{4}}{4p-1}\right)
=-\left(\frac{4n-1}{4p-1}\right).$
\end{longtable}
 
\end{proof}

\begin{lemma}

Let $p$ be a prime such that $p>619$ and $4p-1$ is a prime. Then there is a positive integer $n$ such that $4n-1$ and $p-n$  are distinct odd primes less than $p$.

\end{lemma}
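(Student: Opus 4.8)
The plan is to recast the statement as a Goldbach-type representation problem for the prime $4p-1$, and then to attack the resulting existence question. Since $p$ is odd and $p-n$ is to be an odd prime, $n$ must be even, and then $4n-1$ is automatically an odd integer $\geq 7$; the condition $p-n<p$ is automatic, while $4n-1<p$ amounts to $n<(p+1)/4$. Writing $q_2=p-n$ and $q_1=4n-1=4p-1-4q_2$, one checks that $q_1\equiv 7\pmod{8}$ and $0<q_1<p$ hold automatically, and that $q_1=q_2$ would force $4p-1=5q_1$, impossible because $4p-1$ is prime and $4p-1>5$. Hence the lemma is equivalent to the assertion that the prime $4p-1$ can be written as $4p-1=q_1+4q_2$ with $q_1$ and $q_2$ both prime and $q_1<p$; equivalently, that there is a prime $q_2$ in the interval $\big((3p-1)/4,\ p\big)$ for which $4p-1-4q_2$ is also prime.

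For the existence I would first set up a count. The admissible $n$ --- even integers with $2\leq n<(p+1)/4$ and $4n-1$ prime --- correspond to primes $\equiv 7\pmod{8}$ in $[7,p)$, of which there are of order $p/\log p$, and for each such $n$ the companion value $p-n$ lies in $\big((3p-1)/4,\ p\big)$. I would then try to force at least one companion to be prime, bringing in quantitative information about primes in short intervals and in arithmetic progressions (Brun--Titchmarsh-type upper bounds for the obstructing $n$, a lower-bound sieve for the rest), so as to get positivity of the count once $p$ exceeds some explicit constant $B$; the finitely many primes $p$ with $619<p\leq B$ and $4p-1$ prime would then be checked by direct computation.

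The step I expect to be the real obstacle is exactly this last one. Requiring $4n-1$ and $p-n$ to be prime for the \emph{same} $n$ is a binary-Goldbach-type condition --- that $4p-1=q_1+4q_2$ with two primes --- and no elementary argument is known that produces even one such representation: the parity problem blocks a standard sieve from delivering two genuine primes, and a pure pigeonhole or counting argument cannot force the two order-$p/\log p$ sets (the rescaled primes $\equiv 7\pmod{8}$ below $p$, and the primes in $\big((3p-1)/4,\ p\big)$) to meet inside a range of order $p$; controlling their correlation is precisely where analytic machinery, or some extra structure, must enter. I would therefore expect the actual proof either to invoke a genuinely analytic ingredient (with ``elementary'' in the title understood as ``free of transcendence theory and of the analytic class-number formula''), or else to exploit the hypothesis ``$4p-1$ prime'' more cleverly than I currently see --- for instance by routing the Legendre-symbol identity of Lemma 6.12, or Frobenius--Rabinowitsch (Theorem 6.9), back into the construction of $n$. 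That is the place I would scrutinize most carefully.
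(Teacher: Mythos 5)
Your proposal does not prove the lemma: after the (correct) reformulation as a binary-Goldbach-type representation $4p-1=q_1+4q_2$ with $q_1=4n-1$ and $q_2=p-n$ both prime, the existence step is deferred to ``quantitative information about primes in short intervals,'' a lower-bound sieve, and a finite computation, none of which is carried out --- and, as you yourself observe, the parity problem prevents any standard sieve from delivering the required \emph{simultaneous} primality of $4n-1$ and $p-n$. So, as a self-contained argument, there is a genuine gap at exactly the step you flag as the obstacle.

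But your diagnosis of where the difficulty lies is exactly right, and it should be said plainly that the paper's own proof of this lemma fails at that same point. The paper restricts to $n=6k$ with $k\in T=\{1,2,\dots,[\frac{p-1}{24}]\}$, lets $p_3<\dots<p_t$ be the primes in $[5,\sqrt p\,]$, and for each $i$ singles out the two ``bad'' residue classes $A_{r_i,p_i}$ and $A_{s_i,p_i}$ (the $k$ for which $p_i$ divides $24k-1$ or $p-6k$). It must then exhibit a $k\in T$ avoiding all $2(t-2)$ bad classes. The justification offered is: if $T\subseteq\bigcup_{i=3}^{t}(A_{r_i,p_i}\cup A_{s_i,p_i})$, then ``this contradicts the fact that all of the sets $A_{a,p_i}$ are nonempty.'' That is a non sequitur: the non-emptiness of each individual residue class $A_{a,p_i}$ says nothing about whether the union over all $i$ of the bad classes covers $T$, since an element lying in a ``good'' class modulo $p_3$ may perfectly well lie in a bad class modulo $p_7$. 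Nor can a union bound repair the step, because $\sum_{p_i\le\sqrt p}2/p_i\to\infty$, so the bad classes are not even provably of total size less than $|T|$ by that route; controlling the count of survivors of a sieve by all primes up to $\sqrt p$ is precisely the hard analytic problem you identified. In short, the paper's ``Sieve of Eratosthenes'' argument is invalid, and the Goldbach-type obstacle you anticipated is real and is not overcome there.
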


We will use the Sieve of Eratosthenes to prove Lemma 6.13.

\begin{proof}
 
Let $p$ be a prime such that $p>619$ and $4p-1$ is a prime, and let $n$ be a positive integer such that $4n-1\leq p-2$. Hence, $1\leq n\leq\frac{p-1}{4}$, so $3\leq 4n-1 < p$ and $p-\frac{p-1}{4} \leq p-n < p$. Also, note that $4n-1\ne p-n$, for otherwise $4p-1\equiv 0\pmod 5$, a contradiction. Now let $S=\{p_1, p_2, p_3,\ldots, p_t\}$ where $p_i$ is the $i$-th prime and $p_t$ is the largest prime less than $\sqrt{p}$. We have $4n-1\not \equiv 0\pmod 2$. Since $p\equiv 1 \pmod 2$  we have $p-n\not \equiv 0\pmod 2 \leftrightarrow n\equiv 0\pmod 2$. So from now on we assume that $n\equiv 0\pmod 2$. Also, $4n-1\not \equiv 0 \pmod 3 \leftrightarrow n\equiv 0 \ \text{or} \ 2\pmod 3$. Since $p$ and $4p-1$ are prime and $p>3$, we have $p\equiv 2\pmod 3$. Hence, $4n-1\not \equiv 0 \pmod 3$ and $p-n\not \equiv 0\pmod 3 \leftrightarrow n\equiv 0\pmod 3$. So from now on we assume $n\equiv 0\pmod 2$ and $n\equiv 0\pmod 3$. Hence, $n\equiv 0\pmod 6$ so we have $n=6k$ where $k$ is a positive integer. Since $n\leq\frac{p-1}{4}$ we have $k\leq\frac{p-1}{24}$. So letting $[x]$ denote the greatest integer less than or equal to the real number $x$, we have $k\in T$ where $T=\{1, 2, 3, \ldots, [\frac{p-1}{24}]\}$. 
Thus far, for each $k\in T$ and $i= 1\ \text{or}\ 2$ we have $4\cdot 6k-1\not \equiv 0\pmod{p_i}$ and $p-6k\not \equiv 0\pmod{p_i}$. For each of the remaining primes $p_i$ in $S$ we have $p_i\geq 5$. For such primes, $4\cdot6k-1\not\equiv 0\pmod {p_i}\leftrightarrow k\not\equiv \frac{1}{24}\pmod {p_i}$, and $p-6k\not\equiv 0\pmod {p_i}\leftrightarrow k\not\equiv \frac{p}{6}\pmod {p_i}$. Also, note that for each $p_i$ such that $5\leq p_i \leq p_t$ we have $\frac{p}{6}\not\equiv  \frac{1}{24}\pmod{p_i}$, for otherwise $4p-1\equiv 0\pmod{p_i}$ which is a contradiction since $4p-1$ is a prime greater than the prime $p_i$. Now, for each $a$ and $i$ with $0\leq a \leq p_i-1$ and $3\leq i \leq t$, let 

\begin{longtable}{l}
$A_{a, p_i}=\{k: k\in T\ \text{and}\ k\equiv a\pmod{p_i}\}.$
\end{longtable} 

\noindent To see that each $A_{a, p_i}$ is nonempty it suffices to show that $p_i<[\frac{p-1}{24}]$ for each $i$. For then, for each residue $a$ modulo $p_i$ there is an element $k$ of $T$ such that $k\equiv a\pmod {p_i}$. Since $\frac{p-1}{24}<[\frac{p-1}{24}]+1$ we have $\frac{p-1}{24}-1<[\frac{p-1}{24}]$. Now, $\sqrt p<\frac{p-1}{24}-1\leftrightarrow 0<p^2-626p+625$. Since the primes 619 and 631 are consecutive, with $619^2-626\cdot 619+625=-3708$ and $631^2-626\cdot 631+625=3780$, we have $p_i<\sqrt p<\frac{p-1}{24}-1<[\frac{p-1}{24}]$ for each $i$ whenever $p>619$. Hence, all of the sets  $A_{a, p_i}$ are nonempty. To complete the proof of the lemma, for each $i$ such that $3\leq i \leq t$ let $r_i\equiv \frac{1}{24}\pmod{p_i}$ and let $s_i\equiv \frac{p}{6}\pmod{p_i}$. If $A$ is a set, let $A^c$ denote its complement. Then 

\begin{longtable}{l}
$\displaystyle \bigcap^t_{i=3}(A_{r_i, p_i}\cup A_{s_i, p_i})^c\ne \emptyset.$
\end{longtable} 

\noindent Otherwise, for each $k\in T$ there exists an $i$ such that $k\not \in (A_{r_i, p_i}\cup A_{s_i, p_i})^c$ $\leftrightarrow$ for each $k\in T$ there exists an $i$ such that $k\in A_{r_i, p_i}\cup A_{s_i, p_i}$ which means

\begin{longtable}{l}
$\displaystyle T\subseteq \bigcup^t_{i=3}(A_{r_i, p_i}\cup A_{s_i, p_i}).$
\end{longtable} 

\noindent But this contradicts the fact that all of the sets $A_{a, p_i}$ are nonempty. Hence, for some $k\in T$ we have 

\begin{longtable}{l}
$\displaystyle k\in \bigcap^t_{i=3}(A_{r_i, p_i}\cup A_{s_i, p_i})^c.$
\end{longtable} 

\noindent That is, there exits a $k\in T$ such that for each $i$, with 
$3\leq i\leq t$, we have $k\not \equiv \frac{1}{24}\pmod{p_i}$  and $k\not \equiv \frac{p}{6}\pmod{p_i}\leftrightarrow$ there exits a $k\in T$ such that for each $i$, with $3\leq i\leq t$, we have $4\cdot 6k-1\not \equiv 0\pmod{p_i}$ and $p-6k\not \equiv 0\pmod{p_i}$. For this $k$ let $n=6k$. Then $4n-1\not \equiv 0\pmod {p_i}$ and $p-n\not \equiv 0\pmod {p_i}$ for $1\leq i\leq t$. Hence, by the Sieve of Eratosthenes $4n-1$ and $p-n$ are prime.

\end{proof}

We can now complete the proof of Theorem 1.3. 

\begin{proof}

Let $K={\mathbb Q}(\sqrt{1-4p})$ where $p$ is an odd prime such that $p>619$. If $4p-1$ is not prime, then $K$ has class number greater than 1 by Proposition 6.4. If $4p-1$ is prime, then by Lemma 6.13 and Lemma 6.12 there is, respectively, an odd prime $q$ such that $q<p$ and $(\frac{q}{4p-1})=1$. Hence, by Lemma 6.10, $K$ has class number greater than 1.

\end{proof}

\section*{References}

\begin{enumerate}

\item[{[1]}]  Baker, A.: {\it Linear forms in the logarithms of algebraic numbers.} Mathematika 13 (1966), 204--216.

\item[{[2]}]  Baker, A.:{\it On the class number of imaginary quadratic fields.} Bull. Amer. Math. Soc. 77 (1971), 678--684.

\item[{[3]}] Frobenius, F. G.:  {\it \"Uber quadratische Formen, die viele Primzahlen darstellen.} SBer. Kgl. Preu{\ss}. Akad. Wiss. Berlin (1912), 966--980.

\item[{[4]}]  Gauss, C. F.: Disquisitiones Arithmeticae. Springer-Verlag, New York (1986)

\item[{[5]}]  Goldfeld, D.: {\it Gauss's class number problem for imaginary quadratic fields.} Bull. Amer. Math. Soc. (N.S.) 13 (1985), no. 1, 23--37.

\item[{[6]}]  Heegner, K. {\it Diophantische Analysis und Modulfunktionen.} Math. Z. 56 (1952), 227--253.

\item[{[7]}]  Marcus, D. A.: Number Fields. Springer-Verlag, New York-Heidelberg (1977)

\item[{[8]}] Narkiewicz, W.: Elementary and Analytic Theory of Algebraic Numbers. Springer-Verlag, Berlin (2004)

\item[{[9]}]  Pollack, P.: A Conversational Introduction to Algebraic Number Theory. Arithmetic Beyond $\mathbb Z$. American Mathematical Society, Providence, RI (2017)

\item[{[10]}]  Rabinowitsch, G.: {\it Eindeutigkeit der Zerlegung in Primzahlfaktoren in quadratischen Zahlk\"orpern.}  J. Reine Angew. Math. 142 (1913), 153--164. 

\item[{[11]}]  Redmond, D.: Number Theory: An Introduction. Marcel Dekker, Inc., New York (1996)

\item[{[12]}]  Stark, H. M.: {\it A complete determination of the complex quadratic fields of class-number one.} Michigan Math. J. 14 (1967), 1--27.

\item[{[13]}]  Stark, H. M.: {\it On the ``gap'' in a theorem of Heegner.} J. Number Theory 1 (1969), 16--27.

\end{enumerate}

\end{document}